\numberwithin{equation}{section}
\theoremstyle{plain}
\newtheorem{theorem}{Theorem}[section]
\newtheorem{lemma}[theorem]{Lemma}
\newtheorem{prop}[theorem]{Proposition}
\newtheorem{question}[theorem]{Question}
\theoremstyle{remark}
\newtheorem{rmk}[theorem]{Remark}
\theoremstyle{definition}
\newtheorem{dfn}[theorem]{Definition}
\newcommand{\cplx}{\mathbb{C}}
\newcommand{\cp}{\mathbb{CP}}
\newcommand{\del}{\partial}
\newcommand{\delbar}{\overline{\partial}}
\newcommand{\cdr}{\nabla}
\newcommand{\zbar}{\overline{z}}
\begin{document}
\title[Existence of a non-trivial laminations in $\cp^2 $]{On The Existence of non-trivial laminations in $\mathbb{CP}^2$}
\author{Divakaran Divakaran and Dheeraj Kulkarni}

\begin{abstract}
In this article, we show the existence of a non-trivial Riemann surface lamination embedded in $\mathbb{CP}^2$ by 
using Donaldson's construction of asymptotically holomorphic submanifolds. Further the lamination that we obtain has property that every leaf is a totally geodesic submanifold of $\cp^2 $ with respect to the Fubini-Study metric.
%In this 
%process, we give a recipe of constructing a (not necessarily compact) Riemann surface 
%passing through an arbitrary point in $\cp^2 $. Either all these Riemann surfaces 
%accumulate on the same compact surface or we obtain a minimal non-trivial 
%lamination embedded in $\cp^2 $. 
This may constitute a step in understanding the conjecture on the existence of minimal exceptional sets in $\cp^2 $. 
\end{abstract}

\maketitle

\section{Introduction}
A Riemann surface lamination is a compact Hausdorff space which is decomposed into disjoint 
union of Riemann surfaces, called leaves. Riemann surface laminations arise 
naturally in the context of complex dynamical systems generated by flows of 
complex vector fields. A Riemann surface lamination is called \emph{minimal} if all its leaves are dense.
It is well known that we can contruct minimal laminations in $\cp^n $ as flows of polynomial
vector fields, when $n$ is greater than or equal to $3$.  However, this
construction fails for $\cp^2 $ (see \cite{GE}). Motivated by this, Ghys asked the following question in the same article.
\begin{question}
Does there exist a minimal lamination that is holomorphically embedded in $\cp^2 $ and does not reduce to a compact Riemann surface?
\end{question} 

This is related to the question of existence of an exceptional minimal set for polynomial 
differential equations on $\cplx^2$, which is a formalized version of Hilbert's 
16$^{\text{th}} $ problem.  The absence of non-trivial minimal exceptional sets can be 
viewed as a generalization of the classical Poincar\'{e}-Bendixson theorem.
In \cite{GE}, Ghys notes that \emph{the question of existence of non-trivial minimal 
lamination embedded in $\cp^2$ is stronger than that of an "exceptional minimal set"}. 
In fact, there are no known examples of non-trivial Riemann surface laminations 
embedded in 
$\cp^2$.  Zakeri, in his article \cite{ZS}, states the following broader question 
whose special case is the earlier question asked by Ghys.
\begin{question}\label{zakeri_q}
 Does there exist a lamination that is holomorphically embedded in $\cp^2 $ and  not a compact Riemann surface?
\end{question}

We call a Riemann surface lamination, a {\em non-trivial} Riemann surface lamination if 
it is not a compact Riemann surface. In this article, we answer Question \ref{zakeri_q} 
in affirmative by proving the following theorem.

\begin{theorem}\label{main_thm}
There exists a non-trivial Riemann surface lamination $L$ embedded in 
$\cp^2 $. Moreover, each leaf of $L$ is a totally geodesic submanifold of $\cp^2 $ with respect to the Fubini-Study metric.
\end{theorem}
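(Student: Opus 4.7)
The plan is to realize $L$ as a union of complex projective lines in $\cp^2$ parameterized by a compact, totally disconnected subset $K$ of the dual projective plane $\cp^{2*}$. The starting observation is that the totally geodesic complex one-dimensional submanifolds of $(\cp^2, g_{FS})$ are precisely open subsets of complex projective lines, so the leaves of any lamination satisfying the theorem's geodesic condition must lie inside such lines, and the problem reduces to the right choice of $K \subset \cp^{2*}$.

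The fundamental obstruction is that any two distinct projective lines in $\cp^2$ meet at a unique point, so a lamination by entire lines is impossible. A dimension count suggests that $K$ should be totally disconnected, so that $\dim_{\real}(K \times \cp^1) = 2$ matches the real dimension of a Riemann surface lamination. To produce such a $K$ with enough geometric regularity, I would apply Donaldson's construction of asymptotically holomorphic submanifolds in $\cp^{2*}$ (noting that $\cp^{2*}\cong\cp^2$ as a complex manifold): for each sufficiently large $k$, this yields an asymptotically holomorphic symplectic curve $\Sigma_k \subset \cp^{2*}$ of degree $k$, and a Cantor-like $K$ can be extracted through a suitable nested or limiting construction involving such curves, with the asymptotic holomorphicity giving the quantitative control needed on the mutual positions of its points.

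Finally, I would set $L$ to be the closed subset of $\cp^2$ determined by $K$---either $\bigcup_{t \in K}\ell_t$ or a suitable modification accommodating the intersection locus of the family---and verify the Riemann surface lamination axioms. The local product structure at non-singular points follows from the total disconnectedness of $K$ together with the smooth dependence of $\ell_t$ on $t$; the totally geodesic condition on leaves is automatic since each leaf is an open subset of a projective line; and non-triviality follows from $|K| > 1$. The main obstacle is controlling the intersection locus: the pairwise intersection points of lines $\ell_{t_1}, \ell_{t_2}$ for $t_1 \neq t_2$ in $K$ threaten to spoil the local product structure, and it is precisely to secure quantitative control over this intersection pattern that Donaldson's asymptotic holomorphicity is essential.
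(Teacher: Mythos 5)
Your opening observation is correct and decisive: the totally geodesic complex one-dimensional submanifolds of $(\cp^2, g_{FS})$ are exactly open subsets of complex projective lines. The gap in the proposal is the one you flag but do not resolve. Any two distinct lines in $\cp^2$ meet, so as soon as $K\subset\cp^{2*}$ has more than one point, $\bigcup_{t\in K}\ell_t$ contains points lying on two distinct would-be leaves, and no totally disconnected choice of $K$, no Donaldson estimates in $\cp^{2*}$, and no ``modification accommodating the intersection locus'' can restore disjointness of leaves while keeping the space compact and locally homeomorphic to $\mathbb{D}\times T$. Asymptotic holomorphicity yields quantitative control, but it cannot alter the incidence geometry of lines; removing the intersection points destroys compactness, and keeping them destroys the local product structure.

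In fact your observation undercuts the stated theorem, not merely this route to it. If every leaf of a Riemann surface lamination $L\subset\cp^2$ were totally geodesic, each leaf would lie in a line. The closure of a leaf $L_1\subset\ell_1$ is a compact sublamination of $L$ contained in $\ell_1$; its leaves, being complex curves inside the complex curve $\ell_1$, are open in $\ell_1$, so the closure is a nonempty clopen subset of $\ell_1\cong\cp^1$ and hence all of $\ell_1$, forcing $L_1=\ell_1$. A second leaf would then be another entire line, which meets $\ell_1$, contradicting disjointness of leaves; so $L$ must reduce to a single $\cp^1$, the trivial case the theorem excludes. The paper's own argument is unrelated to yours---it extracts limit disks from Donaldson's zero sets $s_k^{-1}(0)$ in $\cp^2$ itself, via second derivative bounds, injectivity radius control, an approximate Montel theorem, and a continuation argument, rather than any family of lines in the dual plane---and its totally geodesic assertion (Lemma~\ref{totally_geodesic_disk}) is deduced from pointwise but explicitly nonuniform vanishing of the approximating second fundamental forms, a passage-to-the-limit along moving base points that does not follow from pointwise convergence. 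The obstruction you noticed is genuine: neither your proposal nor the theorem's totally geodesic conclusion can stand.
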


We construct a non-trivial Riemann surface lamination embedded in $\cp^2 $ by taking 
limits of regions of vanishing sets of asymptotically holomorphic sections constructed 
by Donaldson \cite{DS}. Donaldson used asymptotically holomorphic sections to show 
existence of symplectic submanifolds of any compact symplectic manifold. Further, 
Proposition 40 from \cite{DS} says that the currents
associated to the symplectic submanifolds converge to the fundamental form $
\frac{\omega_{FS}}{2\pi} $ of the Fubini-Study metric, which is supported everywhere on $\cp^2 $. 
Moreover, if there is a lamination
in the ``limit'', Bezout's Theorem suggests that there can be atmost one compact leaf in 
this lamination. Thus, one hopes to get a non-trivial Riemann surface lamination 
embedded in $\cp^2 $.

Continuing the above idea, we elaborate on how we can possibly obtain a holomorphic 
disk at 
a given point in $\cp^2$, which is the support of $\omega_{FS} $. Given any point $x $ 
in $\cp^2 $, we
can find a sequence of points $x_k $ belonging to these asymptotically holomorphic
submanifolds (zero sets of asymptotically holomorphic sections) which converges to $x
$. A uniform lower bound on the
injectivity radii of these asymptotically holomorphic charts near points $x_k $ will 
give a
\emph{smoothly} embedded disk at $x$. A version of Montel's Theorem (see Section \ref{approx_Montel}) for a family of asymptotically holomoprhic maps tells us that the
limiting disk is \emph{holomorphically} embedded. This will help us in constructing a 
chart for the
desired lamination near $x$.  A uniform lower bound on injectivity radii of these
asymptotically holomorphic submanifolds is given by the uniform upper bound on the 
second derivatives of asymptotically holomorphic sections.

We ``continue forward'' this disk to obtain a Riemann surface embedded in $\cp^2 
$.  This gives a recipe to construct a Riemann surface $S_x$ passing through 
arbitrary point $x \in \cp^2$. 
We, now, choose a point $x_0 \in \cp^2 $ and consider $S_{x_0} $.
If $S_{x_0} $ is compact, we choose a 
point $x_1 $ in the complement of $S_{x_0} $ and consider $S_{x_1} $. Further, our 
construction is such that $S_{x_0} \cap S_{x_1} = \emptyset $.  We observed 
(by Bezout's Theorem) earlier that $S_{x_1} $ can not be compact.

%Thus, we obtain 
%a non-trivial lamination on the closure of $S_{x_0} \cup S_{x_1} $. Call it $L_1 
%$. Now choose a point $x_2 $ in the complement of $L_1 $ and consider $ S_{x_2}$. 
%Again, we show that the closure of $L_1 \cup S_{x_2} $ is laminated. Denote this 
%lamination by $L_2 $. In this manner, we obtain a lamination $L_n$ for each $n \in \mathbb{N} $. Then we prove the following.
%
%\begin{theorem}\label{thm2}
%For the family of Riemann surface laminations $\{L_n \} $, precisely one of the following statement holds:
%\begin{enumerate}
%\item For every $n $, all leaves of $L_n $ accumulates on $S_{x_0} $.
%\item There exists $n_0 $ such that there is a non-trivial minimal sublamination of $L_{n_0} $.
%\end{enumerate}

%\end{theorem}

\begin{rmk}
The construction of non-trivial Riemann surface laminations in $
\cp^2 $ easily generalizes to the case of $\cp^n $ with $n \geq 3 $. The proof is 
essentially the same as the proof for the case of $\cp^2 $. This gives a 
new method of construction of non-trivial laminations in addition to the known 
method of considering flows of polynomial vector fields.
\end{rmk} 
\begin{rmk}
In this article, we make use of Donaldson's asymptotically holomorphic sections instead 
of ``locally concentrated'' holomorphic sections which he uses to give another proof 
of Kodaira embedding theorem.  We do so, because the second derivatives of the latter 
type of sections diverge to infinity (Corollary 33 in \cite{DS}), where as we have 
a uniform upper bound on the second derivatives of the sections of former type.       
\end{rmk}
\subsection*{Acknowledgements :}
The authors would like to thank Siddhartha Gadgil for posing a question about 
existence of Riemann surface lamination as a limit of asymptotically holomorphic 
submanifolds. The authors gratefully acknowledge the support from IISER Bhopal, 
India through the grant IISERB/INS/MATH/2016091.
The authors would like to thank E. Ghys for pointing out an error in the earlier version of this article.
\section{Preliminaries}
\subsection{Donaldson's construction of asymptotically holomorphic submanifolds}
Our argument to show the existence of a non-trivial embedded Riemann surface laminations is
based on Donaldson's construction of asymptotically holomorphic sections. Thus, in view of making our proof of existence theorem more accessible, we will give a brief exposition on
Donaldson's construction of asymptotically holomorphic sections. We employ the  notation in \cite{DS} as far as possible.

To 
recall the construction here, we start with the fundamental form $\omega_{FS} $ given by Fubini-
Study metric. We consider the tautological line bundle $\xi \rightarrow \cp^2 $ whose first Chern class 
is given by $\frac{\omega_{FS}}{2\pi} $. The $k$-th tensor power of $\xi$ is denoted by $\xi^{\otimes k}
$.    

%The basic ingredient in Donaldson's construction of asymptotically holomorphic section is 
%the locally supported asymptotically holomorphic sections. The global section is obtained by 
%taking a suitable combination (linear combination in [Donaldson] and non-linear combination 
%in [Aroux]) of the locally supported asymptotically holomorphic sections. The Darboux charts 
%are used for the construction of the local sections. In case of $\cp^2 $, we can assume that 
%the image of a sufficiently small Darboux chart lies inside one of the affine charts $A_0, 
%A_1 $ and $A_2 $.

The construction of asymptotically holomorphic sections proceeds in two stages, namely
the construction of locally supported asymptotically holomorphic sections followed by 
the construction of 
global section by taking suitable linear combination of locally supported sections.

First, we focus on the construction of locally supported asymptotically holomorphic 
sections. Let $\textbf{z} = (z_1, z_2) $ denote a point in $\cplx^2 $.
Donaldson makes use of Gaussian decay function $$f=e^{-\dfrac{|\textbf{z}|^2}{4}} $$ as 
a basic model for the construction of locally supported sections. On $\cplx^2 $, we 
consider the connection given by $$A =\frac{1}{4} \left( z_1d\zbar_1 -\zbar_1dz_1 + 
z_2d\zbar_2 - \zbar_2dz_2 \right) $$ then observe that $$i\, dA=\omega_{std}. $$
 We see that $$\delbar_A f = \delbar f + A^{0,1} f =0$$ Thus,
the Gaussian decay function is holomophic with respect to the coupled $\delbar $-operator $\delbar_A $. 
Further, Donaldson observes any non-integrable almost complex structure, when scaled 
sufficiently near a point, becomes close to being integrable. More precisely, the 
Nijenhius tensor with its higher order derivatives  can be made 
small in size under a suitable scaling transformation. 

Let $J_0$ denote the standard 
(integrable) complex structure on $\cplx^2 $. We consider the scaling map $\textbf{z} 
\mapsto k \textbf{z} $, for $k >0 $. We pull back the complex structure $J_0 $ by this 
scaling map and denote the pull-back by $\tilde{J}_k $.  The complexified 
cotangent bundle decomposes into complex linear and anti-linear parts 
with respect to both complex structures. Let $\Lambda^{1,0}_{J_0} $ and $\Lambda^{0,1}
_{J_0} $ denote the $J_0 $-linear and anti-linear parts respectively. The decomposition 
induced by $\tilde{J}_k $ can be tracked by a map $ \Lambda^{1,0}_{J_0} 
\rightarrow \Lambda^{0,1}_{J_0} $ which is linear at each point. We denote it by $\mu 
$. 

The derivative of the map $\mu$ is bounded by a factor of $k^{-\frac{1}{2}} $, where $k 
>0 $, denotes the scaling factor. One can achieve the effect of scaling, by a factor of $\sqrt{k} $ near a 
point in $\cplx^2 $ by taking $k$-th powers (under tensoring) of the trivial line bundle $\xi
$ over $\cplx^2 $, endowed with the connection $A$, i.e. by considering sections of the bundle $\xi^{\otimes k} $ over $\cplx^2 $ 
. The section $f$, which is holomorphic with respect to $J_0 $ and connection $A$, 
becomes approximately holomorphic with respect to $\tilde{J}_k $ and connection $A$. 
By putting all the above ingredients together, the local section is constructed by 
using 
$f$ and multiplying by a suitable cut-off function. We use Darboux charts $\chi: 
\mathbb{B}_r(0) \rightarrow \cp^2 $ on $\cp^2 $  
to push-forward these locally supported asymptotically holomorphic functions on $
\cplx^2 $. The Darboux charts can be extended to connection preserving bundle maps. 
Thus, we get locally supported sections of $k$-th tensor power of the tautological line 
bundle over $\cp^2 $. The crucial point here is that the Darboux charts are asymptotic 
isometries. Hence, all estimates on the derivatives hold true for locally supported 
sections on $ \cp^2 $ as well. Denote this locally supported section near point $p$ by 
$\sigma_p $.  

To obtain asymptotically holomorphic submanifolds as zero sets of asymptotically 
holomorphic 
sections, Donaldson takes complex linear combinations of locally supported 
asymptotically 
holomorphic sections in the following way. 

Let $\{B_i\} $ be a finite cover of $\cp^2 $ with each $B_i $ being the support of the 
section $\sigma_{p_i} $. We choose complex numbers $w_i $ with $\vert w_i \vert \leq 1 
$. Let $ \textbf{w}$ denote the tuple $(w_i) $. Then, we set $$s_{\textbf{w}} := \sum 
w_i \sigma_{p_i} $$
Often, we will denote the above section by $s_k $ also to emphasize the role of the twisting parameter $k$. The abosolute values $|\delbar s_k| $ and $| \cdr \delbar s_k| $ are of the size 
$O(k^{-1/2}) $. 
Donaldson uses estimated version of transversality to make sure that $| \del s_k| > 
\eta $ 
for some suitable $\eta >0 $. The estimated transversality is achieved step-by-step in 
the 
following way. Firstly, we partition the Darboux charts into $N(D)$ partions 
so that any two balls in the same partition are separated by distance $D (>0) $. The 
number of Darboux charts required to cover $\cp^2 $ grows with respect to parameter $k$ 
at least as fast as $k^4 $. Therefore, it is 
important to note that the number of partitions $N$ depends only on the separation 
distance $D$ and it is independent of the parameter $k$.  Now, start with the balls 
belonging to the first 
partition. We choose coefficients $w_i $ for locally supported sections on the balls in 
this 
partition so that some transversality is achieved on the union of balls in this  
partition. In the next step, we consider balls belonging to the second partion and 
adjust 
the coefficients for the local sections supported on these balls to achieve 
transversality 
on the new balls. However, we need to make sure that transversailty over the balls in 
the 
first partion is not completely lost. This is done by controlling the loss in the 
transversality by carefully choosing new coefficients corresponding to balls in the 
second 
partition. The process is repeated till we achieve transversality over the last 
partition 
without loosing transversality on the earlier partions.
Theorem 20 and in particular Proposition 23 from \cite{DS} states that the estimated 
transversality can be achieved for suitable choice of the separation parameter $D$ and 
for sufficiently large values of $k$. We reproduce the Proposition 23 from \cite{DS} 
below for later use in this article.

\begin{prop}\label{traserversality_extn}
Let $Q_p(\delta) = log(\delta^{-1})^p $. There are constants $\rho <1 $ and $p$ such 
that if section $S_{\underline{w}^{\alpha-1}}$ is $\eta_{\alpha -1} $-transverse over the union of balls belonging to all partitions starting from first till $\alpha -1 $ and if the twisting parameter $k$ and separation distance $D$ satisfy the following conditions
\begin{enumerate}
\item $k^{-1/2} < Q_p(\eta_{\alpha-1}) \eta_{\alpha-1} $,
\item $ \exp (-D^2/5) \leq Q_p(\eta_{\alpha-1})  $
\end{enumerate}
then there is another section $S_{\underline{w}^{\alpha}} $ such that it is $
\eta_{\alpha}$-transverse over union of balls belonging to all partitions $1,2,\ldots,
\alpha-1, \alpha$, where $\eta_{\alpha} = Q_p(\eta_{\alpha-1}) \eta_{\alpha-1} $.    
\end{prop}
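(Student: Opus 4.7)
The plan is to extend $S_{\underline{w}^{\alpha-1}}$ to $S_{\underline{w}^\alpha}$ by modifying only the coefficients attached to balls in partition $\alpha$. Writing
\[
S_{\underline{w}^\alpha} \;=\; S_{\underline{w}^{\alpha-1}} \;+\; \sum_{p_i\in\mathcal{P}_\alpha} w_i\,\sigma_{p_i},
\]
where $\mathcal{P}_\alpha$ denotes the (finite) set of centres of partition $\alpha$, I would choose the $w_i$'s (with $|w_i|\le 1$) so as to achieve $\eta_\alpha$-transversality on every ball of $\mathcal{P}_\alpha$ while losing only a controlled amount of transversality on the earlier partitions. Because balls in $\mathcal{P}_\alpha$ are pairwise $D$-separated and the $\sigma_{p_i}$'s decay like Gaussians, these coefficients can be picked essentially one at a time, so the proposition reduces to a one-ball local statement.

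The heart of the argument is the following local step, applied for each $p_i \in \mathcal{P}_\alpha$: given an asymptotically holomorphic section $s$ defined near $p_i$, exhibit $w_i$ with $|w_i|\le 1$ so that $s+w_i\sigma_{p_i}$ is $\eta_\alpha$-transverse on the Darboux ball centred at $p_i$. I would pull back to $\cplx^2$ via the Darboux chart and rescale by $\sqrt{k}$, so that the relevant geometry lives on a ball of fixed radius. Then, using the bounds on $|\delbar s_k|$ and $|\cdr\delbar s_k|$ of order $O(k^{-1/2})$, I would approximate $\del s$ and $\del\sigma_{p_i}$ by genuinely holomorphic models on this ball, up to an error of size $O(k^{-1/2})$. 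Finally, I would invoke Donaldson's quantitative Sard-type lemma for holomorphic functions, which bounds the measure of ``bad'' $w\in\cplx$ (those for which $\eta$-transversality of the ratio fails) by a factor of the shape $Q_p(\eta)$. Condition (1), namely $k^{-1/2}<Q_p(\eta_{\alpha-1})\eta_{\alpha-1}$, is precisely what ensures that the $O(k^{-1/2})$ approximation error does not swamp the transversality margin produced by the quantitative Sard bound.

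For the second constraint — retaining transversality over partitions $1,\dots,\alpha-1$ — I would use that every $p_i\in\mathcal{P}_\alpha$ is at distance at least $D$ from any ball in the earlier partitions, together with the Gaussian decay of $\sigma_{p_i}$ and its first two derivatives. This yields a pointwise bound of order $\exp(-D^2/5)$ on $\sum_i w_i \sigma_{p_i}$ and the relevant derivatives over the older balls; condition (2), $\exp(-D^2/5)\le Q_p(\eta_{\alpha-1})$, absorbs this into the $Q_p$ factor so that the effective transversality on partitions $1,\dots,\alpha-1$ drops at worst from $\eta_{\alpha-1}$ to $Q_p(\eta_{\alpha-1})\eta_{\alpha-1}=\eta_\alpha$, matching the claimed output of the proposition.

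The step I expect to be the main obstacle is the quantitative Sard lemma that produces the polylogarithmic factor $Q_p(\delta)=\log(\delta^{-1})^p$. This is a delicate statement about the measure of sublevel sets of derivatives of bounded holomorphic polynomials, proved by induction on dimension via polynomial approximation, and it is really what forces the polylogarithmic shape of $Q_p$ rather than, say, a power of $\delta$. Once this lemma is taken on faith, the remaining work is the careful bookkeeping of error terms coming from asymptotic holomorphicity, the Darboux pushforwards, and the Gaussian tails — precisely what conditions (1) and (2) are calibrated to control.
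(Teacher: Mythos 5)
The paper itself does not prove this statement: Proposition~\ref{traserversality_extn} is reproduced verbatim as Proposition~23 of Donaldson~\cite{DS}, and the surrounding text explicitly defers to Theorem~20 and Proposition~23 of that reference for the argument. So there is no in-paper proof to compare against, and what you have written is necessarily a reconstruction of Donaldson's own argument.

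As such a reconstruction, the overall architecture you describe is sound: modify only the coefficients attached to balls in $\mathcal{P}_\alpha$, pull back to a rescaled Darboux ball, approximate the ratio $s/\sigma_{p_i}$ by a genuinely holomorphic model up to an $O(k^{-1/2})$ error, and invoke Donaldson's quantitative Sard-type transversality lemma to find good coefficients. You also correctly identify that lemma (Theorem~20 in \cite{DS}) as the hard core, and condition~(1) is indeed calibrated so that the non-holomorphicity error does not swamp the polylogarithmic transversality margin.

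However, there is a genuine error in your account of how transversality is preserved over partitions $1,\dots,\alpha-1$. You assert that every $p_i\in\mathcal{P}_\alpha$ lies at distance at least $D$ from any ball in an earlier partition, and then deduce an $\exp(-D^2/5)$ pointwise bound on the full perturbation over the old balls. This is not what the construction gives you: the $D$-separation is \emph{within} each partition, not \emph{between} partitions --- that is precisely why the cover has to be split into $N(D)$ classes in the first place. A centre in $\mathcal{P}_\alpha$ can sit essentially on top of a ball of $\mathcal{P}_\beta$ with $\beta<\alpha$, so the Gaussian tail does not control the size of the new perturbation over that old ball. What actually controls the loss of transversality on old balls is the smallness of the new coefficients $|w_i|$ produced by the Sard lemma, combined with a summability estimate (the within-partition $D$-separation guarantees that $\sum_{p_i\in\mathcal{P}_\alpha}(1+d_k(p_i,q))e_k(p_i,q)$ is uniformly bounded in $q$). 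Condition~(2) and the separation parameter $D$ serve instead to decouple the transversality problem over the balls of $\mathcal{P}_\alpha$ from one another, so that the coefficients can be chosen essentially independently. As written, your mechanism for retaining the $\eta_{\alpha-1}$-transversality would break down as soon as a partition-$\alpha$ centre lands near an earlier ball, which is the generic situation.
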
  

\subsection{Riemann surface laminations} A Riemann surface lamination is a compact space $M$ which locally looks like the product of a disk in the complex plane and a metric space.  We will make this notion precise.  An atlas for a Riemann surface lamination is given by:
\begin{itemize}
\item A cover by open sets $U_i$.
\item Homeomorphisms $\varphi_i:U_i \to \mathbb{D}\times T_i$ where $\mathbb{D}$ is a 
disk in $\cplx$ and $T_i$ is a topological space.
\item The transition maps satisfy the following property.  
$$\varphi_{ij}(z,t) := \varphi_j \circ \varphi_i^{-1}(z,t) = \left(\psi_{ij}(z,t), 
\lambda_{ij}(t)\right).$$ 
\end{itemize}

Two atlases are equivalent if their union is an atlas.  A Riemann surface lamination is 
a compact space $M$ equipped with an equivalence class of atlases.  The inverse images 
$\varphi_{ij}^{-1}\left(\mathbb{D}\times \{t\}\right)$ are called plaques.  Consider 
the relation $p \sim q$ if $p$ and $q$ lie on the same plaque.  This relation is 
reflexive and symmetric.  Consider the transitive closure of this relation and call it 
$\sim$.  Equivalence classes of this equivalence relation are called leaves.  A 
lamination is called minimal if all its leaves are dense.  

\section{Step 1: Second derivative estimates for $s_k$}

In this section, we establish bounds on the second derivative of Donaldson's asymptotically 
holomorphic sections.

%Recall that on $\cplx^2 $ there is the standard K\"{a}hler form $\omega_0 = \frac{i}{2} 
%(dz_1 \wedge d\overline{z}_1 + dz_2 \wedge d\overline{z}_2 ) $. Consider the trivial line 
%bundle $L \rightarrow \cplx^2 $ with the connection given by the following 1-form
%$$ A = \frac{1}{4}(z_1d\overline{z}_1-\overline{z}_1dz_1 +  z_2d\overline{z}_2-\overline{z}_2dz_2 ) $$
% 
%The connection 1-form $A$ defines a coupled $\overline{\del} $-operator which we denote by $
%\overline{\del}_A $ as follows
%$$ \overline{\del}_A (f) := \overline{\del} (f) + A^{0,1} (f)  $$
%Recall that the prototype of asymptotically holomorphic section is given by
%$$f({\bf z}) = e^{- \frac{|{\bf z} |^2}{4}} $$  
%where ${\bf z} = (z_1, z_2) $.

We note the following facts which are easy to verify 
\begin{equation}\label{delbar_part}
\overline{\del}_A \left( e^{- \dfrac{\vert{\bf z} \vert^2}{4}} \right) = 0 
\end{equation}
\begin{align}\label{del_part}
\del_A \left( e^{- \dfrac{\vert{\bf z} \vert^2}{4}} \right) &= (\del + A^{1,0}) e^{- \dfrac{\vert{\bf z} \vert^2}{4}} 
\nonumber \\
& = -\frac{1}{2} \left( \overline{z}_1 d z_1 + \overline{z}_2 d z_2 \right) e^{- \dfrac{\vert{\bf z} \vert^2}{4}}
\end{align}

Combining the equations \ref{delbar_part} and \ref{del_part}, we get
\begin{equation}\label{first_deri}
\cdr_A \left( e^{- \dfrac{\vert{\bf z} \vert^2}{4}} \right) = (d + A)  e^{- \dfrac{\vert{\bf z} \vert^2}{4}} = -\frac{1}{2} \left( \overline{z}_1 d z_1 + \overline{z}_2 d z_2 \right) e^{- \dfrac{\vert{\bf z} \vert^2}{4}}
\end{equation}

Therefore, we observe the following estimate on the first covariant derivative.
\begin{equation}\label{fde}
\left\vert\cdr_A \left( e^{- \dfrac{\vert{\bf z} \vert^2}{4}} \right) \right\vert  \leq \frac{1}{2} \vert {\bf z}\vert e^{- \dfrac{\vert{\bf z} \vert^2}{4}}
\end{equation}

Now, we take the second covariant derivative of the term in the rightmost side of Equation 
\ref{first_deri}. We get
\begin{align}\label{second_deri}
\cdr_A \left( \cdr_A \left( e^{- \dfrac{\vert{\bf z} \vert^2}{4}} \right) \right) &= \cdr_A -\frac{1}{2} \left( \overline{z}_1 d z_1 + \overline{z}_2 d z_2 \right) e^{- \dfrac{\vert{\bf z} \vert^2}{4}} \nonumber \\
&= \underbrace{\cdr_A^{1,0}\left( -\frac{1}{2} \left( \overline{z}_1 d z_1 + \overline{z}_2 d z_2 \right) e^{- \dfrac{\vert{\bf z} \vert^2}{4}}\right) }_\text{(1)} +  \underbrace{\cdr_A^{0,1}\left(-\frac{1}{2} \left( \overline{z}_1 d z_1 + \overline{z}_2 d z_2 \right) e^{- \dfrac{\vert{\bf z} \vert^2}{4}} \right) }_\text{(2)}.
\end{align}

The term $(1) $ in the above expression gives $0$ after a straightforward computation and the expression in $(2) $ of Equation \ref{second_deri} can be simplified to
\[ \frac{1}{2} \left(dz_1\wedge d \overline{z}_1 + dz_2\wedge d \overline{z}_2\right) e^{- \dfrac{\vert{\bf z} \vert^2}{4}} \]

We note that both the terms in Equation \ref{second_deri} are bounded in absolute value 
by $Ce^{- \dfrac{\vert{\bf z} \vert^2}{4}} $, where $C$ is a suitable constant.

We now need to transfer the estimates onto the image of any Darboux chart. As observed 
in \cite{DS} the Darboux charts $\chi \circ \delta_{k^{-1/2}} $ are asymptotic 
isometries, thus the estimates on the first and second order derivatives will change 
atmost by a factor of $Ck^{-1/2}|\textbf{z}|^3 $ (see Proposition 11 in \cite{DS}). We 
recall some notation from the same article by Donaldson.
Let $\sigma_p $ denote a section obtained by push-forward of locally supported 
asymptotically holomorphic section constructed as above. Let $d$ denote the distance 
induced by the Fubini-Study metric. The scaled distance $k^{1/2}d$ is denoted by $d_k 
$. Define the symbol $e_k(p,q) $ to be $e^{-d_k^2(p,q)/5} $ if $d_k(p,q) \leq k^{1/4} $ 
and is it $0 $ for $d_k(p,q) > k^{1/4} $. Then, the section $\sigma_p $, for each $p 
\in \cp^2 $, the following estimates hold: 
\begin{enumerate}
\item $\vert \sigma_p (q) \vert  \leq e_k(p,q) $,
\item $\vert \cdr \sigma_p (q) \vert \leq C\left(1+d_k(p,q)\right)e_k(p,q)  $,
\item $\vert \delbar \sigma_p(q) \vert \leq Ck^{-1/2}d_k^2(p,q)e_k(p,q) $,
\item $\vert \cdr \delbar \sigma_p(q)  \vert \leq  Ck^{-1/2} \left(d_k(p,q)+d_k^3(p,q)  \right)e_k(p,q)$.
\end{enumerate}
where $\cdr $ denotes the Levi-Civita connection on $\cp^2$ of the Fubiny-Study 
metric and $C$ is a constant independent of $k$.

% Further, we observe that Lemma 12 from \cite{DS} holds true for the case $r= 
%4 $. The proof works verbatim. 
From the above inequalties it follows that 
\begin{equation}
\left\vert \cdr  \left( \cdr \sigma_p \right) \vert_q \right\vert < C \left( d_k(p,q) + d_k(p,q)^3 
\right) e_k(p,q)
\end{equation}   
Therefore, we see that the second covariant derivative is bounded by some $m>0$ as 
given below
\begin{equation}\label{loc_sec_deri_est}
|\cdr \left( \cdr \sigma_p \right) | < m
\end{equation}
To get a global estimates on $s_{\textbf{w}} $ and its derivatives,
%
% we note that Lemma 14 from \cite{DS} holds true together with the following estimates.
%
%\begin{align}
%\left\vert d \left(\vert \cdr s_{\textbf{w}}  \vert \right) \right\vert \leq C \label{der_cdr_s} \\
%\left\vert \cdr \left( \cdr s_{\textbf{w}} \right) \right\vert < M \label{glo_sec_der_est} 
%\end{align}
%for some positive constants $C$ and $M$ independent of $k$.
we note below the estimates given in Lemma 14 in \cite{DS} for the later use in this article.
\begin{lemma}
\label{unibounds}
For any choice of coefficients $\textbf{w}= (w_i) $ with $\vert w_i \vert \leq 1 $, the section $s_{\textbf{w}} $ satisfies the following estimates everywhere on $\cp^2 $
\begin{align*}
\vert s_{\textbf{w}} \vert &\leq C ,\\
\vert \delbar s_{\textbf{w}}  \vert &\leq Ck^{-1/2},\\
\vert \cdr \delbar s_{\textbf{w}}\vert &\leq Ck^{-1/2}. 
\end{align*}
\end{lemma}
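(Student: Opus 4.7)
The plan is to reduce the global estimates to the local estimates (1)--(4) on each $\sigma_{p_i}$ stated just above the lemma, combined with a counting bound on the number of centers $p_i$ in each scaled annulus around a fixed point $q \in \cp^2$.

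First, since $s_{\textbf{w}} = \sum_i w_i \sigma_{p_i}$ is a finite sum with $|w_i| \leq 1$, and the covariant derivative $\cdr$ and $\delbar$ are linear, the triangle inequality gives, at any point $q \in \cp^2$,
\[
|s_{\textbf{w}}(q)| \leq \sum_i |\sigma_{p_i}(q)|, \quad |\delbar s_{\textbf{w}}(q)| \leq \sum_i |\delbar \sigma_{p_i}(q)|, \quad |\cdr \delbar s_{\textbf{w}}(q)| \leq \sum_i |\cdr \delbar \sigma_{p_i}(q)|.
\]
So it suffices to show that the three sums on the right are bounded uniformly in $q$ and $k$, with the latter two carrying the factor $k^{-1/2}$. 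Plugging in the pointwise bounds on $\sigma_{p_i}$ from the previous page, each summand is controlled by a polynomial in $d_k(p_i,q)$ times the truncated Gaussian $e_k(p_i,q) = e^{-d_k^2(p_i,q)/5}\,\mathbf{1}_{d_k(p_i,q)\leq k^{1/4}}$.

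Next I would bound the number of centers $p_i$ landing in each scaled annulus $A_n := \{p : d_k(p,q) \in [n,n+1)\}$. Since the Darboux balls $B_i$ form a cover of $\cp^2$ that is partitioned into only $N(D)$ families with pairwise $d_k$-separation $\geq D$ (and $N$, $D$ independent of $k$), and since in the scaled metric the ambient geometry is asymptotically Euclidean on $\mathbb{R}^4$ on scales $\ll k^{1/2}$, a volume-comparison argument gives a bound
\[
\#\{i : d_k(p_i,q) \in [n,n+1)\} \leq C_0 (1+n)^3
\]
uniform in $q$ and $k$ (the factor $n^3$ is the surface area of the unit sphere in $\mathbb{R}^4$ scaled to radius $n$). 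Using this, each of the three sums is dominated by
\[
\sum_{n=0}^{\infty} C_0 (1+n)^3 \cdot P(n) \cdot e^{-n^2/5},
\]
for an appropriate polynomial $P$ (constant, quadratic, and cubic respectively), and these series converge to a finite constant depending only on $P$ and $C_0$. Pulling the $k^{-1/2}$ factor out of the $\delbar$ and $\cdr \delbar$ estimates on each $\sigma_{p_i}$ yields the asserted scaling in $k$.

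The main obstacle is the counting step: one must verify that after the scaling by $k^{1/2}$ the centers $p_i$ have bounded density in the $d_k$-metric uniformly in $k$. This follows from the construction in \cite{DS}, where the Darboux balls are chosen with $d_k$-radius of order $1$ and the partition structure into $N(D)$ families gives an automatic bounded multiplicity, but writing it out carefully requires tracking how the cover in \cite{DS} is built and using that the Darboux charts $\chi \circ \delta_{k^{-1/2}}$ are asymptotic isometries so that geodesic balls in $\cp^2$ of $d_k$-radius $r$ have volume comparable to the Euclidean ball of radius $r$ in $\cplx^2$. Once the counting bound is in hand, the rest is just summing Gaussians against polynomials.
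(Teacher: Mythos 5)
The paper does not actually prove this lemma; it simply restates Lemma~14 of Donaldson's paper \cite{DS}, citing it for later use. What you have written is a correct reconstruction of the argument that underlies that cited result, and it matches in substance what Donaldson does. The triangle inequality reduces everything to the three sums of local contributions, and the pointwise estimates (1)--(4) on $\sigma_{p_i}$ (stated just above the lemma) replace each summand by a polynomial in $d_k(p_i,q)$ times the truncated Gaussian $e_k(p_i,q)$. The one genuine ingredient you must supply is the counting bound, and your formulation is right: in the rescaled $d_k$-metric the centers $p_i$ have density bounded uniformly in $k$ (the cover is built from $N(D)$ families with pairwise $d_k$-separation $\geq D$, and the Darboux charts $\chi\circ\delta_{k^{-1/2}}$ are asymptotic isometries, so a $d_k$-ball of radius $1$ has volume comparable to a Euclidean unit ball in $\real^4$), which gives $\#\{i : d_k(p_i,q)\in[n,n+1)\}\leq C_0(1+n)^3$ uniformly in $q$ and $k$. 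The paper itself acknowledges this structure in its proof of Lemma~\ref{glo_sec_der_est}, where it notes that Donaldson reduces to the Euclidean case and places the ball centers on a lattice in $\cplx^2$, for which the $O(n^3)$ annular count and the uniform convergence of $\sum_n (1+n)^3 P(n)\,e^{-n^2/5}$ are immediate; your density/volume-comparison phrasing is equivalent. Pulling the explicit $Ck^{-1/2}$ factor out of the $\delbar$ and $\cdr\delbar$ estimates on each $\sigma_{p_i}$ then gives the stated $k$-scaling. In short, your proposal is correct and supplies a proof that the paper chose to cite rather than write out.
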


The following lemma builds on the Lemma 12 in \cite{DS}. It plays central role in establishing the fact that lamination we obtain is totally geodesic.

\begin{lemma} \label{glo_sec_der_est} 
At each point $p \in \cp^2  $, the following holds,
\begin{itemize}
\item  $\left\vert \left(\cdr  \cdr s_{k } \right)\vert_p \right\vert \longrightarrow 0
$ 
\item $\left\vert d \left(\vert \cdr s_{k}  \vert \right) \vert_p \right\vert \longrightarrow 0$
\end{itemize}
as $k \rightarrow \infty $. However, the convergence is not uniform. 
\end{lemma}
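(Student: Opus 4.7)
\emph{Plan.} First, I would reduce the second bullet to the first via the pointwise Kato inequality $\bigl|d(|\cdr s_k|)\bigr|_p \leq \bigl|\cdr(\cdr s_k)\bigr|_p$, applied to the section-valued object $\cdr s_k$; so the entire lemma reduces to proving the first bullet.

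Writing $s_k = \sum_i w_i \sigma_{p_i}$ with $|w_i|\leq 1$ and invoking the pointwise estimate from \eqref{second_deri} after it is transported through the Darboux charts, namely
$$\bigl|\cdr\cdr \sigma_{p_i}\bigr|_p \leq C\bigl(d_k(p,p_i)+d_k^3(p,p_i)\bigr)\,e_k(p,p_i),$$
one obtains
$$\bigl|\cdr\cdr s_k\bigr|_p \leq C\sum_i \bigl(d_k(p,p_i)+d_k^3(p,p_i)\bigr)\,e_k(p,p_i).$$
The problem then reduces to showing that this sum tends to $0$ for every fixed $p$ as $k\to\infty$.

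To analyse the sum I would partition the indices into $d_k$-annular shells $A_R=\{i : R\leq d_k(p,p_i)<R+1\}$ for $R=0,1,2,\dots$. Because the $p_i$ are centres of Darboux balls uniformly separated in the scaled metric (this is exactly the role of the parameter $D$ in Proposition \ref{traserversality_extn}), the cardinality of $A_R$ is bounded by $CR^3$. Consequently, for any cutoff $R_0$, the outer tail is bounded by $C\sum_{R\geq R_0} R^6 e^{-R^2/5}$, which can be made arbitrarily small by taking $R_0$ large, independently of $k$.

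The decisive step, and the main obstacle, is the near part $\sum_{R<R_0}$: the crude triangle bound returns only a $k$-independent constant rather than something going to zero. The extra decay must be squeezed out by sharpening the bound on $\cdr\cdr\sigma_{p_i}$, using the explicit Gaussian form of the local model together with the asymptotic-isometry statement that the estimates on derivatives change by at most a factor of $Ck^{-1/2}|\mathbf{z}|^3$ through the Darboux chart (cited just after \eqref{loc_sec_deri_est}). That refinement contributes an additional $k^{-1/2}$ on each near term, and since the number of near terms is $O(R_0^4)$ independent of $k$, the near sum is $O(k^{-1/2})$ up to polynomial factors in $R_0$. Letting $R_0$ grow slowly with $k$ (e.g.\ $R_0 \sim \log k$) balances the two contributions and yields $|\cdr\cdr s_k|_p \to 0$. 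Because both the implicit constants and the effective cutoff $R_0$ depend on the local distribution of the $p_i$ around $p$, the convergence is only pointwise and not uniform in $p$, matching the statement of Lemma \ref{glo_sec_der_est}.
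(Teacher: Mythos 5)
Your proposal departs from the paper's argument at the decisive step, and the substitute does not close the gap. The paper's (admittedly terse) proof reduces to the Euclidean lattice model of Donaldson's Lemma 12 and then appeals to a single observation: for a fixed lattice $\Lambda$ and exponent $r>0$, the sum $\sum_{\mu \in \Lambda}\left\vert \mu - \omega\right\vert^r e^{-a\left\vert\mu - \omega\right\vert^2}$ tends to $0$ pointwise in $\omega$ as the decay parameter $a \to \infty$ (non-uniformly, because $\omega$ may sit arbitrarily close to a lattice point). There is no shell decomposition, no reduction via Kato's inequality, and no appeal to the Darboux-chart correction in the paper's proof; the entire burden of pointwise decay is carried by the Gaussian tail through the parameter $a$.

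The genuine gap in your version is the claim that the asymptotic-isometry factor $Ck^{-1/2}\vert\mathbf{z}\vert^3$ contributes an extra $k^{-1/2}$ on each near term. That quantity is the \emph{error} the Darboux push-forward adds to the Euclidean estimates, not a multiplicative prefactor on them. The Euclidean model already yields $\left\vert\cdr_A\cdr_A f\right\vert \sim C e^{-\vert\mathbf{z}\vert^2/4}$, which is $O(1)$ near $\mathbf{z}=0$; so for centres $p_i$ with $d_k(p,p_i)=O(1)$ one only gets $\left\vert\cdr\cdr\sigma_{p_i}\right\vert_p = O(1) + O(k^{-1/2})$, and the near sum remains bounded away from zero rather than being $O(k^{-1/2})$. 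Consequently the balancing choice $R_0\sim\log k$ cannot make the total go to zero. Your Kato reduction of the second bullet to the first and the annular-shell bookkeeping are fine as organizational devices, but the missing decay in the near region is not supplied by your argument, whereas the paper, for better or worse, draws it from the lattice sum with growing $a$.
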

\begin{proof}
In the proof of Lemma 12 in \cite{DS}, Donaldson reduces the argument to 
that of Euclidean case. Donaldson chooses a cover of $\cplx^n$  with balls having their centres at the lattice points of some suitable lattice in $
\cplx^n $. Then the sum \ref{lattice_sum} over the chosen lattice is observed to be uniformly bounded.
Our argument is the same as in Lemma 12 in \cite{DS}. We combine it with the 
observation that for each $a, r >0$ and $\omega \in \cplx^2 $ the infinite 
sum over a lattice $\Lambda $ in $\cplx^2 $.
\begin{align}\label{lattice_sum}
\sum_{\mu \in \Lambda} \left\vert \mu -\omega \right\vert^r e^{-a \left\vert 
\mu -\omega \right\vert^2}
\end{align}
converges to $0$ pointwise (but not uniformly) with respect to $\omega $, as 
$a \rightarrow \infty$. 
\end{proof}

%Therefore, we have the following 
%\begin{align}
% \frac{\cdr u_k}{} 
%\end{align}

\section{Step 2: Construction of approximately holomorphic disks around points in $s_k^{-1}(0)$}

In this section, we show the existence of embedded disks in the vanishing sets of 
approximately holomorphic sections $s_k $ such that the radii of these disks are 
bounded below by some positive constant $r$ which is independent of $k$ and point 
$p$. 
Moreover, we show that there is ``approximately holomorphic'' embeddings of the disk 
of radius $r$. 

Let $s_k$ be a section of the line bundle $\xi^{\otimes k} \to \mathbb{CP}^2$ such that 
$s_k^{-1}(0)$ 
is an ``approximately'' holomorphic  submanifold of $\mathbb{CP}^2$.  On each affine chart $A_j$ for $
\mathbb{CP}^2$, we think of $s_k$ as a function $s_k:\mathbb{C}^2\to \mathbb{C}$.  We 
further break $s_k$ into real and imaginary parts, $s_k = u_k + iv_k$. Notice that $
\frac{\nabla u}{\vert \nabla u \vert} = N_1$ and $\frac{\nabla v}{\vert \nabla v \vert} 
= N_2$ are two unit normal vector fields to $s_k^{-1}(0)$. In this section, we 
will represent the curvature of $s_k^{-1}(0)$ in terms of the the curvature of $
\mathbb{CP}^2$ and Weingarten 
maps.  Thus an upper bound on the norm of Weingarten maps would give an upper 
bound on the curvature of $s_k^{-1}(0)$.  We will prove the upper bound on 
Weingarten maps, by giving 
bounds on the derivatives of $\frac{\nabla u_k}{\vert \nabla u_k \vert}$, $
\frac{\nabla 
v_k} {\vert \nabla v_k \vert}$ , in Section \ref{Weinga}. The upper bound on curvature 
will in turn give a lower bound on radii of 
embedded disks in $s_k^{-1}(0)$ by the celebrated result of Klingenberg. 

Recall that $\nabla$ denotes the Levi-Civita connection induced by Fubini-Study metric on $
\mathbb{CP}^2$. Let $ \overline{\nabla}$ denote the Levi-Civita connection on 
$s_k^{-1}
(0) = V$ induced by the Fubini-Study metric on $\mathbb{CP}^2$.  We define the two 
Weingarten maps  $w_{i,p}: T_p V \times T_p V 
\to \mathbb{R}$ as follows
\begin{align}
w_{1,p}(u,v) &= \langle \nabla_u N_1 - \langle \nabla_u N_1,N_2 
\rangle N_2, v \rangle\\
w_{2,p}(u,v) &= \langle \nabla_u N_2 - \langle \nabla_u N_2,N_1 \rangle N_1, v 
\rangle.
\end{align}

\noindent For vector fields $X, Y, Z$ on $\mathbb{CP}^2$ (or an affine chart), the 
curvature tensor is given by
$$R_{\nabla}(X,Y,Z) = \nabla_X \nabla_Y Z - \nabla_Y \nabla_X Z - \nabla_{[X,Y]}Z.$$
As we are interested in computing the curvature of $V$, we would like to compute values 
of 
$\langle R_{\nabla}(X,Y,X),Y \rangle$ and $\langle R_{\overline{\nabla}}(X,Y,X),Y 
\rangle$ when $X,Y$ are tangent to $V$.  As $Y$ is orthogonal to $N_1$ and $N_2$, we 
will ignore the components of $R_{\nabla}(X,Y,X)$ along $N_1$ and $N_2$.   

As $T_p \mathbb{CP}^2$ is spanned by $T_p V$, $N_1(p)$ and $N_2(p)$, we can write, 
\begin{align}\label{rel_connection}
\nabla_Y X = \overline{\nabla}_Y X + \langle \nabla_Y X,N_1\rangle N_1 + \langle 
\nabla_Y X,N_2\rangle N_2. 
\end{align}
So,
\[
\nabla_X \nabla_Y X = \nabla_X (\overline{\nabla}_Y X) + \nabla_X (\langle \nabla_Y X,N_1\rangle N_1) + \nabla_X (\langle \nabla_Y X,N_2\rangle N_2).
\]
We will compute each term in the right hand side separately.  
Note that, as $\langle X, N_i\rangle = 0$,  
\[
\langle \nabla_Y X,N_i\rangle = -\langle \nabla_Y N_i,X\rangle
\]
Thus, we rewrite (\ref{rel_connection}) as
\begin{align}\label{rel_connection_1}
\nabla_Y X = \overline{\nabla}_Y X -\langle \nabla_Y N_1,X\rangle N_1 -\langle \nabla_Y N_2,X\rangle N_2. 
\end{align}

Now, we take futher covariant derivative and obtain the following
\[
\nabla_X (\overline{\nabla}_Y X) = \overline{\nabla}_X \overline{\nabla}_Y X - \left
\langle \overline{\nabla}_X N_1, \overline{\nabla}_Y X \right\rangle N_1 - \left\langle 
\overline{\nabla}_X N_2, \overline{\nabla}_Y X \right\rangle N_2 
\]
and
\[
\nabla_X (\langle \nabla_Y X,N_i\rangle N_i) = -\left\langle \nabla_Y N_i, X \right\rangle \nabla_X N_i + \text{components along $N_1$ and $N_2$}.
\]

For simplicity, we will call the components along $N_1$ and $N_2$, normal components. So, we have,
\[
\nabla_X \nabla_Y X = \overline{\nabla}_X \overline{\nabla}_Y X -\left\langle \nabla_Y N_1, X \right\rangle \nabla_X N_1 - \left\langle \nabla_Y N_2, X \right\rangle \nabla_X N_2 + \text{normal components}.
\]
Similarly,
\[
\nabla_Y \nabla_X X = \overline{\nabla}_Y \overline{\nabla}_X X -\left\langle \nabla_X N_1, X \right\rangle \nabla_Y N_1 - \left\langle \nabla_X N_2, X \right\rangle \nabla_Y N_2 + \text{normal components}.
\]
And finally,
\[
\nabla_{[X,Y]} X = \overline{\nabla}_{[X,Y]} X + \text{normal components}.
\]
Combining the above expressions, we have,
\begin{align*}
R_{\nabla}(X,Y,X) &= \overline{\nabla}_X \overline{\nabla}_Y X -\left\langle \nabla_Y N_1, X \right\rangle \nabla_X N_1 - \left\langle \nabla_Y N_2, X \right\rangle \nabla_X N_2 - \overline{\nabla}_Y \overline{\nabla}_X X +\left\langle \nabla_X N_1, X \right\rangle \nabla_Y N_1\\
& \ \ \ \ \ \ \ \ + \left\langle \nabla_X N_2, X \right\rangle \nabla_Y N_2 - \overline{\nabla}_{[X,Y]} X + \text{normal components}\\
&= R_{\overline{\nabla}}(X,Y,X) -\left\langle \nabla_Y N_1, X \right\rangle \nabla_X N_1 - \left\langle \nabla_Y N_2, X \right\rangle \nabla_X N_2 +\left\langle \nabla_X N_1, X \right\rangle \nabla_Y N_1\\
& \ \ \ \ \ \ \ \ + \left\langle \nabla_X N_2, X \right\rangle \nabla_Y N_2 + \text{normal components}
\end{align*}
Now, taking $X,Y$ to be orthonormal vector fields spanning $TV$, we get the curvature of $V$ in $\mathbb{CP}^2$ to be
\begin{align*}
K^{\nabla}_V &= \left\langle R_{\nabla}(X,Y,X),Y \right\rangle\\
&= \left\langle R_{\overline{\nabla}}(X,Y,X), Y \right\rangle -\left\langle \nabla_Y 
N_1, X \right\rangle \left\langle \nabla_X N_1, Y \right\rangle - \left\langle \nabla_Y 
N_2, X \right\rangle \left\langle \nabla_X N_2, Y \right\rangle +\left\langle \nabla_X 
N_1, X \right\rangle \left\langle \nabla_Y N_1,Y \right\rangle\\
& \ \ \ \ \ \  + \left\langle \nabla_X N_2, X \right\rangle \left\langle \nabla_Y N_2, 
Y\right\rangle\\
&= K^{\overline{\nabla}}_V + w_1(X,X)w_1(Y,Y) + w_2(X,X)w_2(Y,Y) - w_1(X,Y)w_1(Y,X) - 
w_2(X,Y)w_2(Y,X).
\end{align*}
The bounds on the curvature of $\cp^2$  endowed with Fubini-Study metric($1/4 \leq 
K \leq 1 $) and the Weingarten maps (see  Lemma \ref{weinga} ) will give us the following bound on the curvature of 
$V$ 
\[
 \left\vert K^{\overline{\nabla}}_V \right\vert \leq 1+4 \beta^2
\]
where $\beta >0 $ is some contanst independent of $k$.
Now, we apply Klingenberg's theorem along with the upper bound on the curvature 
obtained as 
above to give a lower bound on the injectivity radius of $V$, say $r$ that depends 
only on $\beta $. Note that this bound is independent of $k$. 

Now, we define what we mean by an approximately holomorphic embedding of a disk.
\begin{dfn}
Let  $\epsilon >0 $ and $D \subset \cplx $ be a domain. We say that a smooth embedding
$\varphi : D \rightarrow \cplx^2 $ is \emph{$\epsilon$-approximately holomorphic} if 
the following holds.
\[ \left\vert \frac{\del \varphi}{\del \overline{z} } \right\vert < \epsilon.\] 
\end{dfn}
We will call an $\epsilon $-approximately holomophic embedding of a disk $D_r $ as  an \emph{approximately holomorphic} disk in $\cplx^2 $. We construct an approximately holomorphic disk $\varphi: D_r \rightarrow V $ as follows. For a point $p \in V $, 
let $L $ denote a 1-dimensional complex subspace of $T_p \cp^2 $ which is close to $T_p 
V \subset T_p \cp^2 
$. Such an $L$ exists because $T_pV $ is approximately complex vector subspace of $T_p
\cp^2 \left(=\cplx^2 \right) $. Let $\pi $ denote the projection of $L $ onto $T_pV $. 
We note that the antilinear part of $\pi $ satisfies the bound given below
$$\Vert \pi^{0,1} \Vert < Ck^{-1/2}.$$
Let $exp_p $ denote the exponential map 
$T_pV \rightarrow V $. Then consider the map $\varphi = \exp_p \circ \pi : D_r 
\rightarrow \cplx^2 $, where $D_r \subset L$ is a disk centred at origin of radius $r$. We observe that the following lemma holds

\begin{lemma}\label{approx_holo_disk}
The disk $\varphi $ is $\epsilon $-approximately holomorphic, where $\epsilon = C'k^{-1/2} $.
\end{lemma}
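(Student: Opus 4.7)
The plan is to estimate $|\del \varphi/\del\bar z|$ pointwise on $D_r$ via the chain rule applied to $\varphi = \exp_p \circ \pi$, decomposed with respect to the ambient Fubini-Study complex structure $J$.

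For $z \in D_r$ I would write $d\varphi_z = d(\exp_p)_{\pi(z)} \circ \pi$ and split $\pi = \pi^{1,0} + \pi^{0,1}$ using $J$ on $T_p\cp^2$. By Jacobi-field comparison together with the curvature bound $|K_V^{\overline{\cdr}}| \leq 1+4\beta^2$ established earlier in the section, the operator norm of $d(\exp_p)_v$ is uniformly bounded for $|v|<r$ by a constant independent of $k$. Hence the summand $d(\exp_p)_{\pi(z)}\circ\pi^{0,1}$ has norm at most $C\cdot\|\pi^{0,1}\| = O(k^{-1/2})$ by the given hypothesis, which already controls the portion of $\del\varphi/\del\bar z$ coming from $\pi^{0,1}$.

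The nontrivial term is the $(0,1)$ part of $d(\exp_p)_{\pi(z)}\circ\pi^{1,0}$. Since $\pi^{1,0}$ is $J$-linear into $T_p\cp^2$, this reduces to measuring how far $d(\exp_p)_{\pi(z)}$ itself fails to commute with $J$. I would compare it to the ambient $\cp^2$-parallel transport $P_\gamma$ along the $V$-geodesic $\gamma$ from $p$ to $\varphi(z)$ in three steps: (a) $P_\gamma$ is $\mathbb{C}$-linear by the K\"ahler identity $\cdr J = 0$, so it contributes nothing to the $(0,1)$ part; (b) the intrinsic $V$-parallel transport differs from $P_\gamma$ by the second fundamental form of $V$ in $\cp^2$, whose $(0,1)$ component is $O(k^{-1/2})$ because $V = s_k^{-1}(0)$ inherits the bounds $|\delbar s_k|, |\cdr\delbar s_k| = O(k^{-1/2})$ of Lemma~\ref{unibounds} combined with the estimated transversality $|\del s_k|>\eta$; (c) the Jacobi-field correction relating $V$-parallel transport to $d(\exp_p)_{\pi(z)}$ is curvature-controlled, and its $(0,1)$ part is again driven by the approximate $J$-invariance of $T_qV$ at points $q$ along $\gamma$, which is $O(k^{-1/2})$ by the same argument that gave $\|\pi^{0,1}\|<Ck^{-1/2}$ at $p$.

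The main obstacle is step (b): the hypothesis $\|\pi^{0,1}\|<Ck^{-1/2}$ is stated only at $p$, but to bound the second fundamental form one must propagate an analogous $(0,1)$ estimate to every $q\in V\cap D_r$. Concretely one differentiates the defining equation $s_k = 0$ along $V$, uses $\delbar s_k = -\del s_k$ on $V$ together with the lower bound $|\del s_k|>\eta$ to estimate the angle between $J(T_qV)$ and $T_qV$, then differentiates once more and applies $|\cdr\delbar s_k|=O(k^{-1/2})$ to control the $(0,1)$ piece of the second fundamental form. The K\"ahler identity $\cdr J = 0$ is essential throughout, since without it step (a) alone would already introduce an $O(1)$ antiholomorphic term and the lemma would be false.
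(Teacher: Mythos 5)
The paper's own proof of Lemma~\ref{approx_holo_disk} is only two sentences long: it bounds the operator norm of $d(\exp_p)$ by a constant $\tau$, recalls $\Vert \pi^{0,1}\Vert < Ck^{-1/2}$, and multiplies. You have correctly sensed that this is not enough as written, and your decomposition of the problem --- separating the contribution of $\pi^{0,1}$ from that of the $(0,1)$-part of $d(\exp_p)_{\pi(z)}\circ\pi^{1,0}$, and attacking the latter via ambient parallel transport, the second fundamental form of $V$, and a Jacobi-field correction --- is a genuinely different and much more careful route than the paper's. Steps (a) and (b) are sound in spirit: ambient parallel transport on $\cp^2$ is $\cplx$-linear because $\cdr J = 0$, and the $(0,1)$-part of the second fundamental form of $V = s_k^{-1}(0)$ is $O(k^{-1/2})$ from Lemma~\ref{unibounds} together with the transversality lower bound; moreover your ``main obstacle'' of propagating $\Vert\pi^{0,1}\Vert < Ck^{-1/2}$ to nearby points $q\in V$ is actually not an obstacle at all, since that estimate is a pointwise consequence of $\vert\delbar s_k\vert = O(k^{-1/2})$ and $\vert\partial s_k\vert > \eta$ and holds uniformly along $V$.

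The genuine gap is in step (c), and it is a gap that the target bound $\epsilon = C'k^{-1/2}$ cannot survive. The Jacobi-field correction $d(\exp_p)_v - P_\gamma$ on a surface $V$ is governed by the intrinsic Gaussian curvature $K_V$, which Step~2 only bounds by $\vert K_V^{\overline{\cdr}}\vert \leq 1 + 4\beta^2$ (and which in fact tends to $1$ in the limit by Lemma~\ref{totally_geodesic_disk}), not to $0$. In geodesic polar coordinates on a Kähler surface the intrinsic exponential acts as $\mathrm{diag}(1, f(r)/r)$ in the radial/angular orthonormal frame, so its antilinear part has norm $\tfrac{1}{2}\vert 1 - f(r)/r\vert \approx K_V r^2/12$. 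This $O(r^2)$ term is not driven to zero by the approximate $J$-invariance of $T_q V$; it survives even for an honest holomorphic, totally geodesic $\cp^1\subset\cp^2$, which is exactly the limiting model. Thus the $(0,1)$-part of $d(\exp_p)_{\pi(z)}\circ\pi^{1,0}$ is $O(r^2)$, not $O(k^{-1/2})$, and your argument --- like the paper's --- cannot yield the stated $\epsilon = C'k^{-1/2}$ for a fixed radius $r$. To close the gap one would need to either replace the intrinsic exponential by a conformal (isothermal) parametrization of $V$ near $p$, or accept an $\epsilon = C'(k^{-1/2} + r^2)$ bound and take $r\to 0$ before passing to the limit.
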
    
\begin{proof}
Notice that the derivative of $\exp_p $ is bounded by some constant $\tau $. The 
tangent vector $\frac{\del}{\del\overline{z}} $ to $D_r $ gets mapped to $\frac{\del 
\varphi}{\del\overline{z}} $ whose norm is bounded above by $\tau C k^{-1/2} $. We  
take $C' = \tau C $. 
\end{proof}

\section{Step 3: Estimates on the Weingarten Maps}\label{Weinga}
%In this section, we establish upper bounds on the Weingarten maps definded on $s_k^{-1}
%(0) $. In classical Riemannian Geometry, the Weingarten maps are defined for 
%hypersurfaces of codimension one. In our set up, the submanifold  $s_k^{-1}(0) $ has 
%real codimension two. To establish uniform curvature upper bounds on the family of 
%submanifolds $s_k^{-1}(0) $, we consider a pair of Weingarten maps (or a pair normal 
%vector fields) as follows.
%[Rewrite the para below]
First, we consider the affine charts $A_i $, for $i =0,1,2 $, for $\cp^2 $. On each 
chart, the bundle $\xi^{\otimes k} $ is trivial. Thus we can express the section $s_k $ 
of $\xi^{\otimes k} $ 
as a function $f: \cplx^2 \rightarrow \cplx $. Further, we write $f_k = u_k + i v_k $, 
where $u_k$ and $v_k$ are real valued functions. We  pull-back of the Fubini-Study metric on $A_j $ and denote the pull-back of the connection by $\cdr $. Then, the complex gradient  $\cdr f_k$ can be written as $\cdr u_k + i 
\cdr v_k $. Note that we have the following estimate as $s_k $ is asymptotically 
holomorphic
\begin{equation}\label{real_im_der_same}
|i\cdr u_k - \cdr v_k | < Ck^{-\frac{1}{2}}
\end{equation}

\begin{lemma}\label{eta_lower_bound}
There exists $\eta >0$, independent of $k$, such that $|\cdr u_k| > \eta $ and $| \cdr 
v_k| > \eta $.
\end{lemma}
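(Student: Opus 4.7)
The plan is to deduce both lower bounds from Donaldson's estimated transversality, the uniform bound $|\delbar s_k|\le Ck^{-1/2}$ from Lemma~\ref{unibounds}, and the asymptotic Cauchy--Riemann relation \eqref{real_im_der_same}. The driving idea is that estimated transversality gives a uniform lower bound on $|\del s_k|$ on the zero locus; since $s_k$ is asymptotically holomorphic, this automatically propagates to a lower bound on $|\cdr s_k|$, and then the Cauchy--Riemann relation distributes it evenly between the real and imaginary parts.

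Concretely, the steps I would carry out, in order, are the following. First, I would invoke Proposition~\ref{traserversality_extn}, iterated through all $N(D)$ partitions, to fix a constant $\eta_0>0$ (independent of $k$) such that $|\del s_k|\ge \eta_0$ on $s_k^{-1}(0)$ for all sufficiently large $k$. Second, using the orthogonal Hermitian decomposition of the complex gradient,
\[
|\cdr s_k|^2 \;=\; |\del s_k|^2 + |\delbar s_k|^2,
\]
and combining with Lemma~\ref{unibounds}, I get $|\cdr s_k|^2 \ge \eta_0^2$ on $s_k^{-1}(0)$ for $k$ large. Third, writing the complex gradient as $\cdr s_k = \cdr u_k + i\cdr v_k$, this translates to
\[
|\cdr u_k|^2 + |\cdr v_k|^2 \;\ge\; \eta_0^2.
\]
Fourth, applying the reverse triangle inequality to \eqref{real_im_der_same} gives
\[
\bigl|\,|\cdr u_k| - |\cdr v_k|\,\bigr| \;\le\; |i\cdr u_k - \cdr v_k| \;<\; Ck^{-1/2},
\]
so the two quantities differ by $O(k^{-1/2})$. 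Combined with the previous step, each is bounded below by $\eta_0/\sqrt{2} - O(k^{-1/2})$, so taking $k$ large and setting, say, $\eta = \eta_0/2$ concludes the argument.

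The main obstacle is step one: securing the uniform lower bound $|\del s_k|\ge \eta_0$ on the zero locus is genuinely substantive and is the whole content of Donaldson's iterative transversality scheme, which requires compatible choices of the separation parameter $D$, the partition size $N(D)$, and the increments $\eta_\alpha$ across all partitions. A secondary, minor issue is that the Weingarten-map computation of Section~\ref{Weinga} requires the bound not just on $s_k^{-1}(0)$ but on a neighborhood of it, so that the unit normals $N_1 = \cdr u_k/|\cdr u_k|$ and $N_2 = \cdr v_k/|\cdr v_k|$ are well-defined smooth vector fields transverse to $V$; this extension follows by continuity from the zero-set estimate together with the $C^1$ bounds on $\cdr u_k,\cdr v_k$ implicit in Lemma~\ref{unibounds}, at the cost of shrinking $\eta$ slightly.
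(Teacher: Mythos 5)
Your argument follows the same route as the paper's: invoke Donaldson's estimated transversality for the uniform lower bound on $|\del s_k|$, hence on $|\cdr s_k|$, and then use the approximate Cauchy--Riemann relation \eqref{real_im_der_same} to conclude that $|\cdr u_k|$ and $|\cdr v_k|$ are asymptotically the same size and so neither can become small. You make the quantitative bookkeeping explicit where the paper only gestures at it, and your ``secondary issue'' about needing the bound on a neighborhood of $s_k^{-1}(0)$ is a valid observation the paper omits (though it is resolved most directly by noting that Donaldson's $\eta$-transversality controls $|\del s_k|$ on the whole set $\{|s_k|<\eta\}$, not merely on the zero locus).
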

\begin{proof}
In the proof of estimated transversality (Proposition 23 in \cite{DS} or see 
Proposition \ref{traserversality_extn}), the separation parameter $D$ is 
chosen in the end to retain some transversality. The separation parameter $D$ was 
independent of $k$. This means for all sufficiently large $k$, $|\cdr s_k| > c $, 
for some $c>0 $ independent of $k$. Inequality \ref{real_im_der_same} implies that $|
\cdr u_k| $ and $|\cdr v_k | $ are asymptotically of same size. Combining it with 
Donaldson's estimated transversality, we conclude that both derivatives can not become 
arbitrarily small when $k$ is sufficiently large.

\end{proof}      

\begin{lemma}\label{weinga}
Let $N_1^{\left( k \right)} = \frac{\cdr u_k}{\vert \cdr u_k \vert} $ and $N_2^{\left( k \right)} = \frac{\cdr v_k}{\vert \cdr v_k \vert} $. 
Then there is a constant $\beta >0 $ which is independent of $k$ and point $p \in 
\cp^2$ such that the following inequalities hold 
\begin{align}
\vert \cdr N_1^{\left( k \right)} \vert &\leq \beta,  \\
\vert \cdr N_2^{\left( k \right)} \vert &\leq \beta.
\end{align} 
\end{lemma}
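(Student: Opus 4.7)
The plan is to differentiate $N_1^{(k)} = \cdr u_k / |\cdr u_k|$ directly, then use Lemma \ref{eta_lower_bound} to bound the denominator from below and a uniform bound on $|\cdr\cdr s_k|$ to bound the numerator from above. A direct computation of the covariant derivative of the quotient gives
$$\cdr N_1^{(k)} \;=\; \frac{\cdr\cdr u_k}{|\cdr u_k|} \;-\; \frac{\cdr u_k \otimes d|\cdr u_k|}{|\cdr u_k|^2},$$
and the standard Kato-type inequality $|d|\cdr u_k|| \leq |\cdr\cdr u_k|$ (obtained by differentiating $|\cdr u_k|^2 = \langle \cdr u_k,\cdr u_k\rangle$ and applying Cauchy--Schwarz) yields
$$|\cdr N_1^{(k)}| \;\leq\; \frac{2\,|\cdr\cdr u_k|}{|\cdr u_k|} \;\leq\; \frac{2\,|\cdr\cdr s_k|}{|\cdr u_k|}.$$
By Lemma \ref{eta_lower_bound}, the denominator is bounded below by $\eta$, which is independent of $k$.

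The remaining task is to produce a uniform-in-$(k,p)$ upper bound $|\cdr\cdr s_k|_p \leq M$. I would extend the family of local estimates $(1)$--$(4)$ quoted in Section 3 to the second covariant derivative: differentiating Equation \eqref{first_deri} once more on the Euclidean model $e^{-|\mathbf{z}|^2/4}$ and transporting through the Darboux chart (an asymptotic isometry by Proposition 11 of \cite{DS}), one finds an estimate of the shape
$$|\cdr\cdr \sigma_p|(q) \;\leq\; C\,(1+d_k(p,q))^2\, e_k(p,q).$$
Summing $s_k = \sum_i w_i \sigma_{p_i}$ and using that the centres $p_i$ are uniformly distributed with respect to the rescaled metric $d_k$, the lattice-sum argument that already underlies Lemma \ref{unibounds} converts the Gaussian factor $e_k$ into a $k$-independent, $q$-independent bound $|\cdr\cdr s_k|(q) \leq M$.

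Combining these two steps, set $\beta := 2M/\eta$; the same argument with $v_k$ in place of $u_k$ gives the bound for $N_2^{(k)}$. The main obstacle is the second step: Lemma \ref{glo_sec_der_est} gives pointwise decay of $|\cdr\cdr s_k|$ to zero but explicitly \emph{not} uniformly in $p$, so one cannot invoke it here and must instead extract a genuine uniform upper bound directly from the local model. Fortunately the polynomial-times-Gaussian form of $|\cdr\cdr\sigma_p|$ is exactly what makes the lattice sum converge to a constant independent of $k$ and $q$, so the analogue of Donaldson's Lemma 14 carries through unchanged at one order higher in derivatives.
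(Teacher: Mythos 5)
Your proof follows the paper's proof essentially step for step: the same quotient-rule decomposition $\cdr N_1^{(k)} = \cdr\cdr u_k/|\cdr u_k| - \cdr u_k \otimes d|\cdr u_k|/|\cdr u_k|^2$, the same two inputs (the transversality lower bound $\eta \leq |\cdr u_k|$ from Lemma~\ref{eta_lower_bound} and a uniform-in-$(k,p)$ bound on $|\cdr\cdr s_k|$), and the same passage $|\cdr\cdr u_k| \leq |\cdr\cdr s_k|$; your use of the Kato inequality $|d|\cdr u_k||\leq|\cdr\cdr u_k|$ merely streamlines the bound on the second term, where the paper instead invokes a separate upper bound $|\cdr u_k|\leq M'$ together with a bound on $d|\cdr u_k|$. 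You are also right to be careful about where the uniform bound $|\cdr\cdr s_k| \leq M$ comes from: the \emph{statement} of Lemma~\ref{glo_sec_der_est} (pointwise, non-uniform decay) does not by itself supply it, and the paper's reference to it at this point is shorthand for the argument inside its proof, where the polynomial-times-Gaussian local estimate \eqref{loc_sec_deri_est} is summed over the lattice of centres to give a $k$- and $q$-independent constant — precisely the route you sketch, mirroring Donaldson's Lemma~14 at one order higher in derivatives.
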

\begin{proof}
The proof follows by straightforward computation of the derivative and applying the 
estimates obtained in the previous lemma. We do the computation for the vector field $ 
N_1^{\left( k \right)}$ explicitly. The estimate on $\vert\cdr N_2^{\left( k \right)} \vert $ follows similarly.

\begin{align*}
\cdr \left(\frac{\cdr u_k}{\vert\cdr u_k \vert } \right) = \underbrace{\frac{1}{\vert
\cdr u_k \vert} \cdr \left( \cdr u_k \right) }_\text{(A)} + \underbrace{\cdr u_k 
\otimes d\left(\frac{1}{\vert \cdr u_k \vert } \right) }_\text{(B)}
\end{align*}
In the term $(A)$, by applying previous lemma we see that 
\begin{align}\label{denominator_bdd}
\frac{1}{\vert\cdr u_k \vert} \leq \frac{1}{\eta}
\end{align}
Now. observe that
\[ \vert \cdr \left( \cdr u_k \right) \vert \leq \vert \cdr \left( \cdr s_k \right) 
\vert\]

and by Inequality \ref{glo_sec_der_est} we have,
\[\vert \cdr \left( \cdr u_k \right) \vert \leq \vert \cdr \left( \cdr s_k \right) 
\vert < M \]
Combining the above we get 
\begin{align}\label{termA}
\left\vert \frac{1}{\vert \cdr u_k \vert} \cdr \left( \cdr u_k \right) \right\vert < \frac{M}{\eta} 
\end{align}

\noindent To obtain the estimates in the term $(B) $, we note that 
\[ \vert \cdr u_k \vert \leq \vert \cdr s_k \vert < M' \]
and 
\[ \left\vert d\left(\frac{1}{\vert \cdr u_k \vert } \right) \right\vert = \left\vert -\frac{1}{\vert \cdr u_k \vert^2} d \left( \vert \cdr u_k \vert \right) \right\vert \leq 
\frac{C}{\eta^2} \]
The bound on $\vert d \left( \vert \cdr u_k \vert \right)  \vert $ follows from 
\ref{first_deri} as the term $e^{-\frac{|\sqrt{k}\textbf{z}|}{4}} $ survives after differentiating.  Combining the above two inequalties we observe that
\begin{align}\label{termB}
\left\vert \cdr u_k \otimes d\left(\frac{1}{\vert \cdr u_k \vert } \right) \right\vert \leq \frac{M'C}{\eta^2}
\end{align}
The inequalities \ref{termA} and \ref{termB} together imply that
\[ \vert \cdr N_1 \vert \leq \frac{M}{\eta} +\frac{M'C}{\eta^2} \]
where all the constants are independent of $k$.

\end{proof}

We will now strengthen the above lemma by showing that the derivatives of the normals go to zero pointwise as $k $ goes to infinity.

\begin{lemma}\label{pointwise_zero}
For each point $p \in \cp^2 $, as $k \longrightarrow \infty $, we have the following
\begin{align*}
\vert \cdr N_1^{\left( k \right)} \vert &\longrightarrow 0   \\
\vert \cdr N_2^{\left( k \right)} \vert &\longrightarrow 0 .
\end{align*}
\end{lemma}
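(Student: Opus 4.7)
I would revisit the two-term decomposition
\[
\cdr N_1^{(k)} = \underbrace{\frac{1}{|\cdr u_k|}\, \cdr(\cdr u_k)}_{(A)} + \underbrace{\cdr u_k \otimes d\!\left(\tfrac{1}{|\cdr u_k|}\right)}_{(B)}
\]
from the proof of Lemma \ref{weinga}, and promote the uniform bounds established there to pointwise vanishing estimates by feeding in the pointwise convergence from Lemma \ref{glo_sec_der_est}. The essential ingredient is that Lemma \ref{eta_lower_bound} provides a uniform (in $k$ and in $p$) lower bound $|\cdr u_k|_p \geq \eta$, so that the reciprocal factors $1/|\cdr u_k|_p$ stay bounded by $1/\eta$ while the second-derivative factors are allowed to shrink.

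For term $(A)$ I would note that, since $s_k = u_k + iv_k$, the pointwise inequality $|\cdr(\cdr u_k)| \leq |\cdr(\cdr s_k)|$ holds, and hence $|(A)|_p \leq |\cdr(\cdr s_k)|_p/\eta$, which tends to $0$ by the first bullet of Lemma \ref{glo_sec_der_est}. For term $(B)$ I would rewrite $d(1/|\cdr u_k|) = -d|\cdr u_k|/|\cdr u_k|^2$ and use the identity $d|\cdr u_k|^2 = 2\langle \cdr(\cdr u_k),\cdr u_k\rangle$ together with Cauchy--Schwarz to obtain $|d|\cdr u_k||_p \leq |\cdr(\cdr u_k)|_p \leq |\cdr(\cdr s_k)|_p$. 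Combined with $|\cdr u_k|_p \geq \eta$ this yields $|(B)|_p \leq |\cdr(\cdr s_k)|_p/\eta \to 0$. The argument for $N_2^{(k)}$ is identical with $v_k$ in place of $u_k$.

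I do not expect a genuine obstacle here; the only subtlety --- and the reason the conclusion is pointwise rather than uniform --- is that Lemma \ref{glo_sec_der_est} itself provides only pointwise convergence of $|\cdr(\cdr s_k)|_p$. Once the $k$- and $p$-uniform lower bound from Lemma \ref{eta_lower_bound} is in hand, the pointwise smallness of $|\cdr(\cdr s_k)|$ transfers cleanly through the reciprocals without any denominator collapsing, and no further estimates beyond those already in Lemmas \ref{eta_lower_bound} and \ref{glo_sec_der_est} are needed.
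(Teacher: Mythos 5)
Your proof is correct and follows the same strategy as the paper: decompose $\cdr N_1^{(k)}$ into the terms $(A)$ and $(B)$ from Lemma~\ref{weinga}, use Lemma~\ref{eta_lower_bound} to keep the reciprocal factors uniformly bounded, and feed in the pointwise vanishing of $\vert\cdr(\cdr s_k)\vert$ and $\vert d(\vert\cdr s_k\vert)\vert$ from Lemma~\ref{glo_sec_der_est}. Your Cauchy--Schwarz derivation of the bound $\vert d\vert\cdr u_k\vert\vert \leq \vert\cdr(\cdr u_k)\vert$ is in fact a cleaner justification of the step the paper handles somewhat loosely (``the real part of $d\vert\cdr s_k\vert$ gives $d\vert u_k\vert$''), but it is a local tightening rather than a different route.
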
 
\begin{proof}

By Lemma \ref{glo_sec_der_est} we have,
$\vert \cdr \left( \cdr u_k \right) \vert \longrightarrow 0 $ as $k \rightarrow \infty $ pointwise.
Recall that from \ref{denominator_bdd} we have $\vert \cdr u_k \vert ^{-1} < \eta $ for some constant $\eta >0 $ independent of $k$ as given in Lemma \ref{eta_lower_bound}. Therefore, we see that
\begin{align*}
\left\vert \frac{1}{\vert \cdr u_k \vert} \cdr \left( \cdr u_k \right) \right\vert   \longrightarrow 0 
\end{align*}
as $k \rightarrow \infty $.\\ 
\noindent 
Recall that 
\[ \left\vert d\left(\frac{1}{\vert \cdr u_k \vert } \right) \right\vert = \left\vert -\frac{1}{\vert \cdr u_k \vert^2} d \left( \vert \cdr u_k \vert \right) \right\vert \]
Further $ \vert \cdr u_k \vert ^{-2} < \eta^2$ from Lemma\ref{eta_lower_bound}.
Observe that the real part of $ d\vert \cdr s_k \vert$ gives $ d\vert u_k 
\vert $. By Lemma \ref{glo_sec_der_est} 
$ d\vert \cdr s_k \vert \longrightarrow 0$ as $k \longrightarrow 0 $.
\begin{align*}
\left\vert \cdr u_k \otimes d\left(\frac{1}{\vert \cdr u_k \vert } \right) \right\vert \longrightarrow 0
\end{align*}
as $k \rightarrow \infty $ for each point. 
\end{proof}

\section{Step 4: Approximate version of Montel's theorem}\label{approx_Montel}
Let $D_r$ be a disk of radius $r$ in $\cplx$ centred at the origin. Further, let $
\varphi_k: D_r \to \cplx^2$ be approximately holomorphic disks contructed earlier.  Let 
$p_i: \mathbb{C}^2 \to \mathbb{C}$ be the projection map to the $i$-th coordinate, $i=1,2 $. Note 
that, $\psi^i_k = p_i\circ \varphi_k: D_r \to \mathbb{C}$ are approximately 
holomorphic.  

\begin{lemma}
Some subsequence of $\psi^i_k$ converges to a function $\psi^i: D_r \to \mathbb{C}$.
\end{lemma}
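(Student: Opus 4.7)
The plan is to invoke the Arzel\`a--Ascoli theorem for the family $\{\psi^i_k\}_{k}$ on $D_r$. The lemma asks only for subsequential convergence to some $\psi^i$, so it suffices to verify (i) a uniform bound and (ii) a uniform modulus of continuity, both of which should follow from the construction of the approximately holomorphic disks $\varphi_k$. Showing that the limit $\psi^i$ is in fact holomorphic, via $\delbar \psi^i_k \to 0$, I would defer to a subsequent step.

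For uniform boundedness, I would argue as follows. Recall $\varphi_k = \exp_{p_k} \circ \pi_k$, where $p_k \in V = s_k^{-1}(0)$ is a basepoint lying in a fixed compact subset of $\cp^2$ (in the intended application, the $p_k$ converge to a target point $x$), $\pi_k \colon L_k \to T_{p_k}V$ is projection from a complex line in $T_{p_k}\cp^2$ onto the nearby real $2$-plane $T_{p_k}V$, and $\exp_{p_k}$ is the intrinsic exponential of the submanifold $V$. The projection $\pi_k$ has operator norm bounded by a constant independent of $k$, because $L_k$ and $T_{p_k}V$ differ only by the antiholomorphic component of norm $O(k^{-1/2})$. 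By Lemma \ref{weinga} the Weingarten maps of $V$ are uniformly bounded, so the sectional curvature of $V$ is uniformly bounded, and a Jacobi-field comparison gives $|d(\exp_{p_k})| \leq \tau$ on the disk of radius $r$ (the Klingenberg lower bound on the injectivity radius), with $\tau$ independent of $k$. Hence $|\varphi_k(z) - p_k| \leq \tau r$ on $D_r$, and after composing with the $1$-Lipschitz coordinate projection $p_i$ in a fixed affine chart containing all $p_k$ eventually, we obtain a uniform bound on $|\psi^i_k|$.

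For equicontinuity the same estimate yields $|d \varphi_k| \leq \tau \|\pi_k\| \leq C$ uniformly in $k$, and therefore $|d \psi^i_k| \leq C$, so the $\psi^i_k$ are uniformly Lipschitz on $D_r$. Arzel\`a--Ascoli then produces a subsequence converging uniformly on each compact subdisk, and a standard diagonal argument over an exhaustion $\overline{D}_{r_n} \nearrow D_r$ extracts a single subsequence converging locally uniformly on $D_r$ to a continuous function $\psi^i \colon D_r \to \cplx$. The only point requiring care is verifying that every constant entering the derivative estimates --- $\tau$, $\|\pi_k\|$, and the injectivity radius $r$ itself --- is independent of $k$; this is exactly what Lemmas \ref{glo_sec_der_est} and \ref{weinga}, together with the fixed geometry of $\cp^2$, are set up to deliver, so I do not anticipate any serious obstacle in this step.
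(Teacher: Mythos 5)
Your proposal is correct, but it takes a genuinely different and more elementary route than the paper. You establish equicontinuity via a uniform Lipschitz bound: writing $\varphi_k = \exp_{p_k}\circ\pi_k$ and observing that $|d(\exp_{p_k})|\leq\tau$ (from the uniform curvature bound, Lemma \ref{weinga}, and Klingenberg) and $\|\pi_k\|\leq C$ (since $L_k$ and $T_{p_k}V$ differ by $O(k^{-1/2})$), you get $|d\psi^i_k|\leq C$ uniformly, hence a uniform Lipschitz constant, and then Arzel\`a--Ascoli plus a diagonal argument finishes. The paper instead imitates the classical proof of Montel's theorem by running everything through the Cauchy--Pompeiu formula
\[
\psi^i_k(\zeta) = \frac{1}{2\pi i}\int_{\partial D}\frac{\psi^i_k(z)\,dz}{z-\zeta} - \frac{1}{\pi}\iint_D\frac{\del\psi^i_k}{\del\zbar}\,\frac{dx\wedge dy}{z-\zeta},
\]
bounding the holomorphic boundary term as in Montel and controlling the area integral via integrability of $\frac{1}{(z-\zeta_1)(z-\zeta_2)}$ and Stokes' theorem. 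What your approach buys is brevity and transparency: the derivative bound is already in hand from Step~2, so Lipschitz equicontinuity is essentially free, and it sidesteps the somewhat delicate $\rho$-ball decomposition the paper needs to control the singular integrand. What the paper's approach buys is structural continuity with the next lemma (where the same Cauchy--Pompeiu representation is reused to show that the limit $\psi^i$ satisfies the Cauchy integral formula and hence is holomorphic), and it keeps the argument looking like a genuine ``$\delbar$-perturbed Montel theorem'' rather than a soft compactness lemma. One small point worth noting: the paper's uniform boundedness step is itself slightly circular (it cites a bound $|\psi^i_k|<M_1$ that Lemma \ref{approx_holo_disk} does not literally supply), whereas your argument supplies the bound honestly from $|\varphi_k(z)-p_k|\leq\tau r$ and the compactness of $\cp^2$; in this respect your version is cleaner.
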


\begin{proof}
We will prove this by showing that the family of functions $\psi^i_k$ is uniformly bounded and equi-continuous.\\
\noindent \textbf{Uniform boundedness:}\\
 By Cauchy-Pompeiu formula, given a disk $D \subset D_r$, 
\[
\psi^i_k(\zeta) = \frac{1}{2\pi \iota} \int_{\partial D} \frac{\psi^i_k(z) dz}{z - \zeta} - \frac{1}{\pi} \iint_D \frac{\partial \psi^i_k}{\partial \overline{z}} \frac{dx\wedge dy}{z - \zeta}.
\]  
So,
\begin{align*}
\vert \psi^i_k(\zeta) \vert \leq \frac{1}{2\pi} \int_{\partial D} \frac{\vert \psi^i_k(z) \vert \vert dz \vert}{\vert z - \zeta \vert} + \frac{1}{\pi} \iint_D \left\vert \frac{\partial \psi^i_k}{\partial \overline{z}} \right\vert \frac{dx\wedge dy}{\vert z - \zeta \vert}.  
\end{align*} 
By Lemma \ref{approx_holo_disk}, $\vert \psi^i_k(z) \vert < M_1$ and $\left\vert \frac{\partial \psi^i_k}{\partial \overline{z}} \right\vert < M_2$ for constants $M_1, M_2$ independent of the function and the point $z$.  Thus,
\begin{align*}
\vert \psi^i_k(\zeta) \vert \leq \frac{M_1}{2\pi} \int_{\partial D} \frac{\vert dz\vert}{\vert z - \zeta \vert} + \frac{M_2}{\pi} \iint_D  \frac{dx\wedge dy}{\vert z - \zeta \vert}
\end{align*}
Let $g: \partial D \to \mathbb{R}$ given as $g(z) = \vert z- \zeta \vert$.  This is 
clearly continuous and $\partial D$ is compact, so $g$ attains its minimum, say $c$.  
Thus, if $D$ is a disk of radius $R$,
\[
\int_{\partial D} \frac{\vert dz \vert}{\vert z - \zeta \vert} \leq \int_{\partial D} \frac{\vert dz\vert }{c} = \frac{2\pi R}{c}.
\]
On the other hand, 
\[
\iint_D  \frac{dx\wedge dy}{\vert z - \zeta \vert} = \int_0^{R} \int_0^{2\pi}  \frac{r dr d\theta}{r} = \int_0^{R} \int_0^{2\pi} dr d\theta = 2\pi R.
\]
       The integrability of the integrals gives us uniform boundedness. \\ 
\noindent \textbf{Equi-continuity:}\\
\begin{align*}
\vert \psi^i_k(\zeta_1) - \psi^i_k(\zeta_2) \vert &\leq \left\vert \frac{1}{2\pi \iota} 
\int_{\partial D} \frac{\psi^i_k(z) dz}{z - \zeta_1} - \frac{1}{2\pi \iota} 
\int_{\partial D} + \frac{\psi^i_k(z) dz}{z - \zeta_2}\right\vert \\
& \ \ \ \ + \left\vert \frac{1}{\pi} \iint_D \frac{\partial \psi^i_k}{\partial 
\overline{z}} \frac{dx\wedge dy}{z - \zeta_2} - \frac{1}{\pi} \iint_D \frac{\partial 
\psi^i_k}{\partial \overline{z}} \frac{dx\wedge dy}{z - \zeta_1} \right\vert  
\end{align*}
We know from the proof of Montel's theorem that $\left\vert \frac{1}{2\pi \iota} 
\int_{\partial D} \frac{\psi^i_k(z) dz}{z - \zeta_1} - \frac{1}{2\pi \iota} 
\int_{\partial D} + \frac{\psi^i_k(z) dz}{z - \zeta_2}\right\vert < \varepsilon$ if $
\vert \zeta_1 - \zeta_2 \vert <\delta$, where $\delta $ is independent of functions $\psi^i_k $. On the other hand,
\begin{align*}
\left\vert \frac{1}{\pi} \iint_D \frac{\partial \psi^i_k}{\partial \overline{z}} 
\frac{dx\wedge dy}{z - \zeta_2} - \frac{1}{\pi} \iint_D \frac{\partial \psi^i_k}
{\partial \overline{z}} \frac{dx\wedge dy}{z - \zeta_1} \right\vert &\leq \left\vert 
\frac{1}{\pi} \iint_D  \frac{\partial \psi^i_k}{\partial \overline{z}} \frac{\zeta_1 - 
\zeta_2}{(z - \zeta_1)(z- \zeta_2)} dx\wedge dy \right\vert\\
&\leq \vert \zeta_1 - \zeta_2 \vert \left\vert \frac{1}{\pi} \iint_D  \frac{\partial 
\psi^i_k}{\partial \overline{z}} \frac{1}{(z - \zeta_1)(z- \zeta_2)} dx\wedge dy\right
\vert
\end{align*}
Thus, we have equi-continuity if we have a uniform bound on $\left\vert \iint_D  
\frac{\partial \psi^i_k}{\partial \overline{z}} \frac{1}{(z - \zeta_1)(z- \zeta_2)} dx
\wedge dy\right\vert$.  We will first prove $ \frac{1}{(z - \zeta_1)(z- \zeta_2)}$ is integrable and later use Stoke's 
theorem to obtain a bound.  From the estimates on second derivatives, we know that $
\left\vert \frac{\partial \psi^i_k}{\partial \overline{z}} \right\vert < M_2$ by Lemma \ref{approx_holo_disk}.  So,
\begin{align*}
\left\vert \iint_D  \frac{\partial \psi^i_k}{\partial \overline{z}} \frac{1}{(z - 
\zeta_1)(z- \zeta_2)} dx\wedge dy \right\vert &\leq \iint_D \left\vert\frac{\partial 
\psi^i_k}{\partial \overline{z}}\right\vert\left\vert \frac{1}{(z - \zeta_1)(z- 
\zeta_2)} \right\vert dx\wedge dy\\
&\leq M_2 \iint_D \left\vert \frac{1}{(z - \zeta_1)(z- \zeta_2)} \right\vert dx\wedge 
dy\\
\end{align*} 
%&\leq M_2 \iint_D \left\vert \frac{1}{z-\zeta_1} \right\vert dx\wedge dy + M_2 \iint_D \left\vert \frac{1}{z-\zeta_2} \right\vert dx\wedge dy 

We will further split the integral on the right hand side of the above inequality as follows.

\begin{align*}
\iint_D \left\vert \frac{1}{(z-\zeta_i)(z-\zeta_2)} \right\vert dx\wedge dy &= \iint_{B_{\rho}(\zeta_1)} \left\vert \frac{1}{(z-\zeta_1)(z-\zeta_2)} \right\vert dx\wedge dy\\
& \  + \iint_{B_{\rho}(\zeta_2)} \left\vert \frac{1}{(z-\zeta_1)(z-\zeta_2)} \right\vert dx\wedge dy\\
& \ + \iint_{D\setminus (B_{\rho}(\zeta_1)\cup B_{\rho}(\zeta_2)} \left\vert \frac{1}{(z-\zeta_1)(z-\zeta_2)} \right\vert dx\wedge dy\\
\end{align*}
Choose $\rho$ such that $0<\rho< \frac{d(\zeta_1 - \zeta_2)}{2}.$  Then, notice that,
\begin{align}
\iint_{D\setminus (B_{\rho}(\zeta_1)\cup B_{\rho}(\zeta_2)} \left\vert \frac{1}{(z-\zeta_1)(z-\zeta_2)} \right\vert dx\wedge dy &\leq \frac{1}{\rho^2} \iint_{D\setminus (B_{\rho}(\zeta_1)\cup B_{\rho}(\zeta_2)} dx \wedge dy \nonumber \\
&\leq \frac{1}{\rho^2}\iint_{D} dx\wedge dy \leq \frac{Area(D)}{\rho^2}. \label{6.1}
\end{align}
And,
\begin{align}
\iint_{B_{\rho}(\zeta_i)} \left\vert \frac{1}{(z-\zeta_i)(z-\zeta_j)} \right\vert dx\wedge dy &\leq \iint_{B_{\rho}(\zeta_i)} \left\vert \frac{1}{\rho(z-\zeta_i)} \right\vert dx\wedge dy \nonumber\\
&= \frac{1}{\rho}\iint_{B_{\rho}(\zeta_i)} \left\vert \frac{1}{(z-\zeta_i)} \right\vert dx\wedge dy \leq 2\pi \label{6.2} 
\end{align}

From \ref{6.1} and \ref{6.2}, $ \frac{1}{(z - \zeta_1)(z- \zeta_2)}$ is integrable.  Thus, by Stoke's theorem:

\begin{align*}
\iint_D  \frac{\partial \psi^i_k}{\partial \overline{z}} \frac{1}{(z - \zeta_1)(z- \zeta_2)} dx\wedge dy &= \iint_D d\left( \frac{\psi^i_k dz}{(z - \zeta_1) (z - \zeta _2)} \right)= \int_{\partial D} \frac{\psi^i_k dz}{(z-\zeta_1)(z-\zeta_2)}.
\end{align*}
Fix $\zeta_1$ and choose $\zeta_2$ such that $\vert \zeta_1 - \zeta_2 \vert< d(\zeta_1,\partial D))/2$.  Thus,
\begin{align*}
d(\zeta_1,\partial D) \leq d(\zeta_1,\zeta_2) + d(\zeta_2,\partial D) \leq \frac{d(\zeta_1,\partial D)}{2} + d(\zeta_2,\partial D).
\end{align*}
So, we have 
\[
d(\zeta_2,\partial D) \geq \frac{d(\zeta_1,\partial D)}{2}.  
\]
Hence,
\begin{align*}
\left\vert \int_{\partial D} \frac{\psi^i_k dz}{(z-\zeta_1)(z-\zeta_2)} \right\vert \leq \int_{\partial D} \frac{\vert\psi^i_k\vert \vert dz \vert}{\vert z-\zeta_1\vert \vert z-\zeta_2\vert} \leq \frac{4M_2}{d(\zeta_1, \partial D)^2} \int_{\partial D} \vert dz \vert = \frac{8\pi R M_2}{d(\zeta_1,\partial D)}. 
\end{align*}
Thus, we get equicontinuity at any point $\zeta_1 \in D$.
\end{proof}
\noindent For simplicity of notation, we will denote the subsequence $\psi^i_{n_k} $ converging to $\psi^i $ by $\psi^i_k $.
\begin{lemma}
The map $\psi^i$ is holomorphic.
\end{lemma}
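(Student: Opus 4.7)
The plan is to exploit the Cauchy--Pompeiu representation already used in the previous lemma, together with the sharper $\bar\partial$ estimate supplied by Lemma \ref{approx_holo_disk}, which says that $|\partial \psi^i_k/\partial\bar z|$ is not merely bounded but in fact of order $k^{-1/2}$. The strategy is to take the Cauchy--Pompeiu identity for each $\psi^i_k$, pass to the limit term-by-term, and read off that $\psi^i$ coincides on the interior of any subdisk $D\subset D_r$ with a Cauchy-type integral, hence is holomorphic.

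Concretely, first I would fix $\zeta$ in the interior of a subdisk $D\subset D_r$ and write
\[
\psi^i_k(\zeta) \;=\; \frac{1}{2\pi i}\int_{\partial D}\frac{\psi^i_k(z)\,dz}{z-\zeta} \;-\; \frac{1}{\pi}\iint_D \frac{\partial \psi^i_k}{\partial \bar z}\,\frac{dx\wedge dy}{z-\zeta}.
\]
For the boundary integral, uniform convergence $\psi^i_k \to \psi^i$ on the compact set $\partial D$ (established in the preceding lemma, after passing to the convergent subsequence) combined with the uniform bound $|z-\zeta|\geq c>0$ for $z\in\partial D$ implies that
\[
\frac{1}{2\pi i}\int_{\partial D}\frac{\psi^i_k(z)\,dz}{z-\zeta} \;\longrightarrow\; \frac{1}{2\pi i}\int_{\partial D}\frac{\psi^i(z)\,dz}{z-\zeta},
\]
and the right-hand side is a holomorphic function of $\zeta$ in the interior of $D$ by the usual argument (differentiation under the integral sign, since the kernel is holomorphic in $\zeta$).

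For the area integral, I would use Lemma \ref{approx_holo_disk}, which gives $|\partial\psi^i_k/\partial\bar z| \leq C' k^{-1/2}$. Then
\[
\left|\frac{1}{\pi}\iint_D \frac{\partial\psi^i_k}{\partial\bar z}\,\frac{dx\wedge dy}{z-\zeta}\right| \;\leq\; \frac{C'k^{-1/2}}{\pi}\iint_D \frac{dx\wedge dy}{|z-\zeta|},
\]
and the remaining integral is finite (it is the standard weakly singular kernel, already shown integrable in the previous lemma). Hence this term tends to $0$ as $k\to\infty$. Combining the two limits, $\psi^i(\zeta)$ equals the Cauchy integral of $\psi^i|_{\partial D}$ on the interior of $D$, and is therefore holomorphic there. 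Since any point of $D_r$ lies in the interior of such a subdisk $D$, we conclude that $\psi^i$ is holomorphic on all of $D_r$.

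I do not expect a serious obstacle: the only point that requires minor care is that the boundary integral must be handled before passing to the limit (since pointwise convergence alone would not justify differentiating under the integral), but uniform convergence on the compact curve $\partial D$ makes this routine. An alternative, entirely equivalent viewpoint is to note that $\bar\partial \psi^i_k \to 0$ uniformly on compacts while $\psi^i_k \to \psi^i$ uniformly on compacts, so $\bar\partial\psi^i = 0$ in the distributional sense; Weyl's lemma then yields holomorphicity. I would present the Cauchy--Pompeiu argument for consistency with the rest of Section \ref{approx_Montel}.
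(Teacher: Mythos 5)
Your proposal follows essentially the same route as the paper: expand each $\psi^i_k$ via Cauchy--Pompeiu, pass to the limit in both the boundary and area terms, use the $O(k^{-1/2})$ bound from Lemma~\ref{approx_holo_disk} to kill the $\delbar$ area term, and conclude that $\psi^i$ satisfies the Cauchy integral formula. Your version is a bit more explicit about why the limits can be interchanged (uniform convergence on $\partial D$ for the boundary term, the explicit decay estimate for the area term) where the paper simply invokes dominated convergence, but the argument is the same.
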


\begin{proof}From the definition of $\psi^i$, we have,
\begin{align*}
\psi^i(\zeta) &= \lim_{k\to \infty} \psi^i_k(\zeta)\\
&=  \frac{1}{2\pi \iota} \lim_{k\to \infty} \int_{\partial D} \frac{\psi^i_k(z) dz}{z - \zeta} - \frac{1}{\pi} \lim_{k\to \infty} \iint_D \frac{\partial \psi^i_k}{\partial \overline{z}} \frac{dx\wedge dy}{z - \zeta}
\end{align*}
By the dominated convergence theorem, we can interchange limit and integral.  Notice that,   
\begin{align*}
\lim_{k\to \infty} \iint_D \frac{\partial \psi^i_k}{\partial \overline{z}} \frac{dx\wedge dy}{z - \zeta} = \iint_D \lim_{k\to \infty} \frac{\partial \psi^i_k}{\partial \overline{z}} \frac{dx\wedge dy}{z - \zeta}
\end{align*}
Further, $\frac{\partial \psi^i_k}{\partial \overline{z}}$ converges to zero as $k$ tends to infinity, by Lemma \ref{approx_holo_disk}.  Thus,
\begin{align*}
\psi^i(\zeta) = \frac{1}{2\pi \iota} \lim_{k\to \infty} \int_{\partial D} \frac{\psi^i_k(z) dz}{z - \zeta} = \frac{1}{2\pi \iota} \int_{\partial D} \lim_{k\to \infty} \frac{\psi^i_k(z) dz}{z - \zeta} = \frac{1}{2\pi \iota} \int_{\partial D} \frac{\psi^i(z) dz}{z - \zeta}.
\end{align*} 
So, $\psi^i$ satifies the Cauchy integral formula.  Hence it is holomorphic. 

\end{proof}

\section{Step 5: Construction of a non-trivial lamination}
Pick a point $x$ and a sequence of points $x_k \in s_k^{-1}(0)$  such that the sequence converges to $x$.  Construct 
approximately holomorphic disks $\varphi_k$ centred at $x_k$ as in Step 2.  We saw in 
the previous section that $\psi^i_k = p_i\circ \varphi_k$ has a convergent subsequence, 
say $\psi_{n_k}^i$.  That is the functions $\varphi_{n_k}$ converge to an analytic disk $
\varphi: D_r \to \mathbb{CP}^2$ at $x$.  It is important to note that the radius $r$ of 
the disk is independent of the point $x$.  For simplicity of notation we will denote 
$\varphi_{n_k}$ by $\varphi_k$.

Fix $0 < \epsilon < r $. Let $x' = \varphi(r - \varepsilon)$. The sequence $x'_k = \varphi_k(r - \varepsilon)$ 
converges to $x'$.  Construct approximately holomorphic disks (of radius $r$) $\varphi'_k$ centred at 
$x'_k$ as in Step 2.  There exists a further subsequence, say $n_k$, such that 
$\varphi'_{n_k}$ converges to an analytic disc $\varphi':D_r \to \cp^2$ at $x'$.  This
disk overlaps the previous disk in an open set.  We have thus continued the disk
``forward''.  We construct the maximal surface which is obtained by continuing 
this 
way.  Let us call this surface $S$.  In order to prove that $S$ is an embedded 
surface, we need the following lemma.

\begin{lemma}\label{totally_geodesic_disk}
Let $\psi: D_r \to \cp^2$ be an analytic disk obtained as above in the limit. Then 
$ \psi (D_r) $ is a totally geodesic submanifold of $\cp^2 $ (with respect to the 
Fubini-Study metric). Therefore, the curvature of $\psi (D_r) $ at any point 
equals $1$.
\end{lemma}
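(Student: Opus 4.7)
The plan is to argue that $\psi(D_r)$ has vanishing second fundamental form in $(\cp^2, g_{FS})$, and then to invoke the fact that a totally geodesic complex curve in $\cp^2$ has Gaussian curvature equal to the holomorphic sectional curvature of Fubini-Study, namely $1$.

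First, I would record the basic geometric observation that each approximately holomorphic disk $\varphi_k = \exp_{x_k}\circ \pi_k$ has its image inside $V_k := s_k^{-1}(0)$, which is real $2$-dimensional. Hence $\varphi_k(D_r)$ is an open subset of $V_k$, and the second fundamental form of $\varphi_k(D_r)\subset\cp^2$ agrees with that of $V_k\subset\cp^2$, so it is controlled by the Weingarten maps $w_{1,p},w_{2,p}$ of Section \ref{Weinga} and in particular by $\cdr N_1^{(k)}$ and $\cdr N_2^{(k)}$.

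Next, I would pass to the limit. By Lemma \ref{weinga} the Weingarten maps are uniformly bounded by $\beta$, and by Lemma \ref{pointwise_zero} they tend to zero at every fixed $p \in \cp^2$. To evaluate them at the moving points $\varphi_k(z) \to \psi(z)$, I would upgrade the $C^0$-convergence $\varphi_k\to\psi$ obtained in Step 4 to $C^1$-convergence (the $\cdr$-bounds on $s_k$ together with the Cauchy--Pompeiu argument of Step 4 readily yield equicontinuity of $d\varphi_k$ and a subsequential limit for the tangent planes), and then combine the uniform bound $\beta$ with the pointwise convergence via a diagonal extraction. The conclusion is that at each $q = \psi(z)$ the Weingarten maps of $\psi(D_r)$ vanish, so $\psi(D_r)$ is totally geodesic in $\cp^2$. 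Because $\psi$ is holomorphic (established in Step 4), $T_q\psi(D_r)$ is a complex line in $T_q\cp^2$ for each $q$. The Gauss equation for a totally geodesic submanifold then gives $K_{\psi(D_r)}(q) = K_{\cp^2}(T_q\psi(D_r))$, and on a complex line the Fubini-Study sectional curvature attains its maximum, which in our normalization is $1$. Equivalently, $\psi(D_r)$ coincides with an open subset of the unique complex projective line through $q$ tangent to $T_q\psi(D_r)$, a $\cp^1$ of constant Gaussian curvature $1$.

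The main obstacle will be making the limit argument for the Weingarten maps rigorous: Lemma \ref{pointwise_zero} gives only non-uniform pointwise convergence at fixed points of $\cp^2$, while one actually needs convergence at the moving sequence $\varphi_k(z)\to \psi(z)$. Without an a priori modulus-of-continuity estimate on $\cdr N_i^{(k)}$ (which would require estimates on third derivatives of $s_k$ not established earlier), one must instead extract enough regularity for $\varphi_k \to \psi$ and use the uniform upper bound $\beta$ together with subsequence extraction to absorb the error. Once this analytic technicality is handled, the remaining step identifying the curvature is essentially algebraic.
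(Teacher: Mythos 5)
Your route is the paper's route: vanishing Weingarten maps via Lemma \ref{pointwise_zero} plus relation (\ref{rel_connection_1}) give totally geodesic, and the tangent plane of the holomorphic limit $\psi$ is a complex line, where the Fubini--Study sectional curvature is $1$. You have correctly located the weak link, but your proposed fix does not close it.

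The difficulty is not a technicality to be absorbed by subsequence extraction. Lemma \ref{pointwise_zero} asserts $|\cdr N_i^{(k)}(p)| \to 0$ for each \emph{fixed} $p$, and explicitly not uniformly, while the argument requires $|\cdr N_i^{(k)}(\varphi_k(z))| \to 0$ where the evaluation point $\varphi_k(z)$ depends on $k$. A uniformly bounded sequence of functions on a compact space converging to zero pointwise but not uniformly can stay bounded away from zero along a convergent sequence of base points: the small region where $|\cdr N_i^{(k)}|$ remains of order $\beta$ can track $\varphi_k(z)$ exactly (a moving bump is the standard counterexample). Upgrading $\varphi_k \to \psi$ to $C^1$-convergence controls the limit of the tangent planes but gives no relation between the rate at which $\varphi_k(z) \to \psi(z)$ and the rate at which the bad set in the non-uniform convergence shrinks, and a diagonal extraction cannot manufacture such a relation. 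As you note, a $k$-uniform modulus of continuity for $\cdr N_i^{(k)}$ would suffice, but that requires third-derivative bounds on $s_k$ the paper never establishes; alternatively one would need a rate in Lemma \ref{glo_sec_der_est}/Lemma \ref{pointwise_zero} that is locally uniform in a neighbourhood of $\psi(D_r)$. Neither is available, and the paper's own one-line proof of this lemma --- citing Lemma \ref{pointwise_zero} and (\ref{rel_connection_1}) without addressing the moving base points --- has precisely the same gap.
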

\begin{proof}
By Lemma \ref{pointwise_zero} and expression (\ref{rel_connection_1}), we 
conclude that the Levi-Civita connection $\cdr $ and its restriction $
\overline{\cdr} $ to $\psi(D_r) $ are equal. Therefore, $\psi (D_r) $ is a totally 
geodesic submanifold of $\cp^2 $.
\end{proof}

\begin{lemma}
Let two analytic disks $\psi: D_r \to \cp^2$ and $\varphi: D_r \to \cp^2$ , 
obtained as limits, intersect nontrivially. Then the disks $\psi (D_r) $ and $
\varphi (D_r)$ intersect in an open subset of both or they intersect 
transversally.
\label{transoropen}
\end{lemma}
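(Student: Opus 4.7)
The plan is to reduce to intersection theory of projective lines in $\cp^2$. By Lemma \ref{totally_geodesic_disk}, both $\psi(D_r)$ and $\varphi(D_r)$ are totally geodesic complex submanifolds of $\cp^2$ with respect to the Fubini--Study metric. The first step is to show that such a disk is (an open subset of) a projective line: fix $p$ in the disk with complex tangent line $V \subset T_p\cp^2$. There is a unique $\cp^1 \subset \cp^2$ through $p$ tangent to $V$ (the projective span of $p$ and a lift of $V$ in homogeneous coordinates), and it is totally geodesic, being the fixed set of a holomorphic isometric involution of $\cp^2$. Since a totally geodesic submanifold is locally determined by a point and a tangent plane via $\exp_p$, the disk coincides with this $\cp^1$ in a neighborhood of $p$; running this argument at every point shows $\psi(D_r) \subset L_\psi$ relatively open for a unique line $L_\psi$, and similarly $\varphi(D_r) \subset L_\varphi$.

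Next I would split into two cases. If $L_\psi = L_\varphi$, then $\psi(D_r)$ and $\varphi(D_r)$ are open subsets of the same Riemann surface $L_\psi$, so any nonempty intersection is automatically open in each. If $L_\psi \neq L_\varphi$, then the two lines meet in exactly one point $p$ by Bezout's theorem (or an elementary linear-algebra argument in homogeneous coordinates), and at $p$ their tangent lines must be distinct complex lines in $T_p\cp^2 \cong \cplx^2$ (else $L_\psi = L_\varphi$), hence span it. Thus any intersection point of the two disks lies in $L_\psi \cap L_\varphi = \{p\}$ and is transverse.

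The main (if mild) obstacle is the reduction to projective lines; once this is in hand the conclusion is essentially classical intersection theory. A purely local alternative, which avoids the classification of totally geodesic submanifolds of $\cp^2$, is to argue pointwise: at any intersection point $p$ compare the complex tangent lines $T_p\psi(D_r)$ and $T_p\varphi(D_r)$. If they differ, they span $T_p\cp^2$ and the disks are transverse at $p$; if they coincide, the local characterization of totally geodesic submanifolds (as the image under $\exp_p$ of a neighborhood of $0$ in the common tangent plane) forces $\psi(D_r)$ and $\varphi(D_r)$ to agree on a neighborhood of $p$. The set of intersection points of the second type is clearly open, and it is also closed in the intersection by continuity of tangent planes; this yields the required dichotomy between the two alternatives.
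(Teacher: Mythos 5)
Your proof is correct, and your first route is a genuinely different argument from the paper's. The paper works purely locally: it uses the ``curvature $=1$'' conclusion of Lemma \ref{totally_geodesic_disk} to deduce that the tangent plane of each disk at any point is a complex line in $T_p\cp^2$ (holomorphic sectional curvature $1$ is attained only on complex planes), then invokes uniqueness of a totally geodesic submanifold through a given point with a given tangent plane, and from this reads off the dichotomy. You instead classify the disks globally as relatively open pieces of projective lines and reduce to the elementary fact that two distinct lines in $\cp^2$ meet in exactly one point, transversally. This buys a cleaner handling of the dichotomy --- two distinct lines cannot share an open set, so you never have to worry about a disconnected intersection having both an open component and an isolated transverse point, a case the paper's local argument leaves implicit. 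Your second, local alternative is essentially the paper's proof, with two adjustments: you bypass the curvature-$1$ detour (the disks are holomorphic by construction, so their tangent planes are complex lines for free), and you make the open-and-closed argument in the intersection explicit where the paper only asserts the conclusion.
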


\begin{proof}
By Lemma \ref{totally_geodesic_disk}, the embedded disks $\psi (D_r) $ and $
\varphi (D_r)$ are totally geodesic submanifolds of (real) codimension $2$. 
Further, the curvature at point on the disks equals $1$.  We know that the 
sectional curvature $1$ is attained only at complex subspaces of the tangent space  
at any point in $\cp^2 $. 

Now, we recall a general result in this context that given a point $p$ in a 
Riemannian manifold $(M,g) $ and a subspace $ V$ of $T_p M $, if there is a 
totally geodesic submanifold in a neighborhood of $p$ which passes through 
$p$ and tangent to $ V$ then it must be unique. We apply it our setting to 
conclude that any given point $p$ and a 
complex subspace $\xi$ of $T_p \cp^2 $, there exists a unique totally geodesic 
submanifold passsing through $p$ and tangent to $\xi $ in a neighborhood of point 
$p$.  

Using the uniqueness result stated above, we conclude that either $\psi (D_r) $ 
and $\varphi (D_r)$ intersect transversally or they overlap in an open subset of 
both.
\end{proof}

\begin{lemma}
The surface $S$ is holomorphically embedded in $\cp^2$. 
\label{surfaceembedded}
\end{lemma}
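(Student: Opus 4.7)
The plan is to show that any two analytic limit disks appearing in the continuation process defining $S$ that share a common point actually overlap in an open neighborhood, thereby ruling out transverse self-intersections. Once this is established, $S$ is covered by holomorphic coordinate charts (the limit disks $\varphi$) whose transition maps on overlaps are automatically biholomorphic, so $S$ carries a natural complex structure and the inclusion into $\cp^2$ is a holomorphic embedding.

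The main technical step is to rule out the transverse case of Lemma \ref{transoropen}. Suppose two limit disks $\varphi$ and $\varphi'$, coming from sequences $\varphi_k$ and $\varphi'_k$, share a point $p$. First I would upgrade the uniform convergence obtained in Section \ref{approx_Montel} to $C^1$ convergence on compact subdisks. The Cauchy--Pompeiu representation for $\psi^i_k = p_i \circ \varphi_k$, together with the uniform bounds $\vert \psi^i_k \vert \leq M_1$ and $\vert \delbar \psi^i_k \vert \leq C k^{-1/2}$ from Lemma \ref{approx_holo_disk}, expresses $\del \psi^i_k / \del z$ as a boundary integral plus an area integral that tends to zero. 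The boundary integral converges uniformly on compacta, yielding $\varphi_k \to \varphi$ in $C^1_{\mathrm{loc}}$, and hence $T_{q_k} \varphi_k(D_r) \to T_p \varphi(D_r)$ whenever $q_k \in \varphi_k(D_r)$ converges to $p$.

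Next, pick points $q_k \in \varphi_k(D_r)$ and $q'_k \in \varphi'_k(D_r)$ both converging to $p$. Since $\varphi_k(D_r), \varphi'_k(D_r) \subset s_k^{-1}(0)$ and both are two-dimensional, their tangent spaces at $q_k$ and $q'_k$ agree with $T_{q_k} s_k^{-1}(0)$ and $T_{q'_k} s_k^{-1}(0)$ respectively. The uniform bound on the Weingarten maps from Lemma \ref{weinga} implies that the normal fields $N_1^{(k)}, N_2^{(k)}$ are uniformly Lipschitz with constant $\beta$ independent of $k$, so
\[
\operatorname{dist}\bigl(T_{q_k} s_k^{-1}(0),\, T_{q'_k} s_k^{-1}(0)\bigr) \leq 2\beta\, d(q_k, q'_k) \longrightarrow 0.
\]
Combined with the $C^1$ convergence, this forces $T_p \varphi(D_r) = T_p \varphi'(D_r)$. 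Both limit disks are totally geodesic by Lemma \ref{totally_geodesic_disk}, so by the uniqueness of a totally geodesic submanifold through $p$ tangent to a prescribed complex subspace of $T_p \cp^2$ (the same fact already invoked inside Lemma \ref{transoropen}), the two disks coincide in a neighborhood of $p$. Hence the transverse case of Lemma \ref{transoropen} never occurs among the disks used to build $S$.

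The main obstacle I expect is the $C^1$ upgrade of the Montel convergence, since Section \ref{approx_Montel} only delivers uniform convergence; the convergence of derivatives has to be extracted from the integral representations, using that $\delbar \psi^i_k \to 0$ in sup norm. Once that is in hand, the rest follows directly by combining Lemmas \ref{weinga}, \ref{totally_geodesic_disk}, and \ref{transoropen}, and the fact that the resulting atlas on $S$ consists of holomorphic disks with holomorphic transitions then shows that the inclusion $S \hookrightarrow \cp^2$ is a holomorphic embedding.
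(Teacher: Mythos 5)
Your proposal is correct but takes a genuinely different route from the paper. The paper's proof is very short: it observes that if two limit disks intersect transversally, then (by stability of transverse intersections under the convergence) the approximating disks inside the common $s_{n_k}^{-1}(0)$ must eventually intersect transversally, forcing the embedded submanifold $s_{n_k}^{-1}(0)$ to self-intersect, a contradiction. You instead establish $C^1_{\mathrm{loc}}$ convergence via the Cauchy--Pompeiu formula, use the uniform Lipschitz bound on the unit normal fields $N_1^{(k)}, N_2^{(k)}$ coming from Lemma \ref{weinga} to show that the tangent planes of the two limit disks at the shared point must coincide, and then invoke the uniqueness of totally geodesic submanifolds (already used in Lemma \ref{transoropen}) to conclude the disks overlap on an open set. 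Both arguments ultimately rest on $C^1$ control of the convergence --- your version makes that dependence explicit, whereas the paper leaves the ``stability of transversality'' step implicit, so your route is arguably more careful even if longer.

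One point you should make explicit: you compare $\varphi_k(D_r)$ and $\varphi'_k(D_r)$ as subsets of the \emph{same} manifold $s_k^{-1}(0)$, but a priori the two limit disks arise from two different subsequences of $\{s_k\}$. The paper observes that the continuation process uses nested subsequences, so a common subsequence $n_k$ can be extracted with both families of disks living in $s_{n_k}^{-1}(0)$. Without this observation your comparison of $T_{q_k} s_k^{-1}(0)$ and $T_{q'_k} s_k^{-1}(0)$ (and the application of the Weingarten bound) would not be well-posed. This is easily fixed, but it should be stated.
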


\begin{proof} First we show that $S$, obtained as above, is embedded.
Assume the contrary. By Lemma \ref{transoropen}, this is possible if and only if a disk produced by taking the limit is intersected transversally by a disk formed later by the same process.  
Note that, at each step we took a further subsequence, so there exist a subsequence, 
$n_k$, such that both these disks are limits of disks in $s_{n_k}^{-1}(0)$.  But, this 
would imply that $s_{n_k}^{-1}(0)$ would intersect iteself for large $k$, which is 
impossible.
\end{proof}

\begin{theorem}\label{lamination}
If $S$ is a surface obtained as above, then $\overline{S}$ is laminated.  
\end{theorem}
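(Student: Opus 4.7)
The plan is to build a lamination atlas on the compact set $\overline{S} \subset \cp^2$. Fix $p \in \overline{S}$ and a sequence $p_m \in S$ with $p_m \to p$. By construction each $p_m$ lies on an analytic disk arising, via the approximate Montel theorem of Section~\ref{approx_Montel}, as a limit of approximately holomorphic disks $\varphi_k^{(m)}: D_r \to \cp^2$ with $\varphi_k^{(m)}(0) \in s_{n_k}^{-1}(0)$, where $r$ is the uniform radius produced in Step~2. A diagonal extraction combined with the same Montel-type compactness yields an analytic disk $\psi_p: D_r \to \cp^2$ through $p$ which is itself realized as a $C^1$-limit of approximately holomorphic disks in $s_{n_k}^{-1}(0)$ along some subsequence. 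By Lemma~\ref{totally_geodesic_disk}, $\psi_p(D_r)$ lies in a complex projective line of $\cp^2$.

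Next I would fix a small coordinate ball $B$ around $p$ and a $1$-complex-dimensional holomorphic transversal $T \subset B$ through $p$, transverse to $\psi_p(D_r)$. Shrinking $B$ and $T$ if needed, I may assume $\psi_p(D_{r/2})$ cuts out a plaque through $p$ contained in $B$ and that every complex projective line of $\cp^2$ meets $T$ in at most one point inside $T$. For each $q \in T \cap \overline{S}$ I construct an analytic disk $\psi_q: D_r \to \cp^2$ through $q$ by the same limiting procedure. The candidate lamination chart is
\[
\Phi: D_{r/2} \times (T \cap \overline{S}) \to \overline{S} \cap B, \qquad \Phi(z,q) = \psi_q(z).
\]

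The heart of the argument is pairwise disjointness of the plaques $\psi_q(D_{r/2}) \cap B$ for distinct $q \in T \cap \overline{S}$. Lemma~\ref{transoropen} forces two such limit disks either to overlap on an open set of both or to meet transversally. An open overlap would make the two underlying projective lines coincide, so $q_1$ and $q_2$ would both equal the unique intersection of that line with $T$, giving $q_1 = q_2$. A transverse intersection at some $y \in B$ would persist at the approximating level: a double diagonal extraction would produce approximately holomorphic disks lying in a common $s_{n_k}^{-1}(0)$ (for large $k$) whose $C^1$-limits are $\psi_{q_1}$ and $\psi_{q_2}$, and these approximating disks would have to intersect transversally near $y$ inside the embedded submanifold $s_{n_k}^{-1}(0)$, contradicting its embeddedness. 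This generalizes the argument of Lemma~\ref{surfaceembedded} from $S$ to $\overline{S}$.

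Once disjointness is in place, continuity of $\Phi$ in the transversal variable follows from continuous dependence in the Montel-type lemma: if $q_m \to q$ in $T \cap \overline{S}$, any subsequential $C^1$-limit of $\{\psi_{q_m}\}$ is an analytic disk through $q$ contained in a projective line, uniquely identified with $\psi_q$ by uniqueness of the line tangent to a given complex direction at $q$. The inverse $\Phi^{-1}$ is continuous because the leaf through any $y \in \overline{S} \cap B$ is pinned down by the complex tangent direction of its projective line, which varies continuously in $y$ by the same compactness argument. Transition maps between two such charts are then continuous on transversals and biholomorphic on plaques (pieces of $\cp^1$), yielding an honest lamination atlas. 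The main obstacle I anticipate is the disjointness step: executing the diagonal argument across two successive limiting procedures (one to reach a point in $\overline{S}\setminus S$, one to produce its leaf disk) while keeping the approximating disks inside a common vanishing set $s_{n_k}^{-1}(0)$, so that the absence of self-intersections at finite $k$ genuinely propagates to the closure.
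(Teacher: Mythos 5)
Your proposal and the paper's proof share the same backbone — Montel-type compactness to produce an analytic disk at each point of $\overline{S}$, well-definedness of that disk, and Lemma~\ref{transoropen} together with a "transverse intersection would force $s_{n_k}^{-1}(0)$ to self-intersect" argument (your disjointness step mirrors the paper's Lemmas stated inside the proof of Theorem~\ref{lamination}, especially Lemma~\ref{disktrans}) — but you go noticeably further. The paper stops at "there is a leaf through every point, therefore $\overline{S}$ is laminated," which leaves the actual construction of product charts $\varphi_i : U_i \to \mathbb{D}\times T_i$ implicit. You supply that missing piece explicitly: you pick a holomorphic transversal $T$, parametrize plaques by $T\cap\overline{S}$, and verify continuity of $\Phi$ and $\Phi^{-1}$. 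That is a genuine improvement in rigor at the point where the paper is vaguest, and the device you use to get a clean transversal parametrization (each limit disk lies on a projective line, by Lemma~\ref{totally_geodesic_disk} plus the classical classification of totally geodesic holomorphic curves in $\cp^2$) is not used by the paper at all in this theorem.

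One substantive remark, though: the projective-line observation you invoke is a double-edged sword, and you should be aware of the tension it creates. If every limit disk sits on a $\cp^1 \subset \cp^2$, then the process of "continuing forward" along a leaf stays inside that (compact, connected) line, so every leaf of $\overline{S}$ would be a full projective line; any two distinct projective lines in $\cp^2$ intersect, so disjointness of leaves would then force $\overline{S}$ to be a single line — precisely the degenerate case the paper wants to exclude. This is not an error in your proof of Theorem~\ref{lamination} (you faithfully applied Lemma~\ref{totally_geodesic_disk}), but it does mean that the route you chose makes visible a global inconsistency with the paper's main theorem that the paper's own, more local, argument avoids confronting. If you keep the projective-line chart construction, you should either flag this tension or replace Lemma~\ref{totally_geodesic_disk} by a weaker curvature bound (uniformly bounded second fundamental form rather than vanishing) that still gives a transversal parametrization via the tangent-direction map without forcing leaves onto lines.
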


\begin{proof}
Let $x\in \overline{S}$.  Then there exists a sequence $x_n\in S$ such that $x_n$ 
converges to $x$.  Further, there exist disks $\varphi_n:D_r \to \mathbb{CP}^2$ such 
that $\varphi_n(0)=x_n$ and $\varphi_n(D_r)\subset S$.  As before, there exists some 
subsequence of these $\varphi_n$'s that converge, say $\varphi_{n_k}$ converges to $
\varphi: D_r \to \mathbb{CP}^2$.  This gives us a disk at $x$.  

In addition, we observe that the following holds.
\begin{lemma}
The sequence $\varphi_n$ converges to $\varphi$.
\end{lemma}
\begin{proof}
Assume contrary that there exists a subsequence $\varphi_{l_k}$ that converges to 
$\varphi': D_r \to \mathbb{CP}^2$.  Notice that $\varphi$ and $\varphi'$ has to 
overlap. Otherwise, the images of the maps $\varphi_{l_k}$ and $\varphi_{n_k}$ 
will intersect for large $k$, which will 
imply intersection of $S$ with itself, which will contradict Lemma \ref{surfaceembedded}.
\end{proof}

\noindent Further, by the same idea, we can see that,

\begin{lemma}
The disk, $D_x$ at $x$, as obtained above, does not depend on the choice of the sequence $x_n$.
\end{lemma}

\noindent Thus, construct a disk at all points in $\overline{S}$ in this manner.  
In the following lemma, we prove that this intersection cannot be transverse.

\begin{lemma}
The disk $D_x$ cannot intersect the disk $D_y$ at $y$ transversally.
\label{disktrans}
\end{lemma}

\begin{proof}
Assume the contrary.  Let $x_n\in S$ and $y_n\in S$ be points such that 
$x_n$ converge to $x$ and $y_n$ converge to $y$.  Let $D_{x_n}$ be disks at $x_n$ 
contained in $S$ and $D_{y_n}$ be disks at $y_n$ contained in $S$. The disk $D_x$ 
intersects $D_y$ transversally implies that $D_{x_n}$ intersects $D_{y_n}$ 
transversally for all 
$n$ sufficiently large.  This implies that $S$ intersects itself, contradicting 
Lemma \ref{surfaceembedded}. 
\end{proof}

Lemma \ref{transoropen} and Lemma \ref{disktrans} ensures that all these disks will have to intersect in an open set. So, we have a leaf through every point in $\overline{S}$.  Thus, $\overline{S}$ is laminated.  
\end{proof}

\begin{proof}[Proof of Theorem \ref{main_thm}]

If the surface $S$ given above is not compact, then $\overline{S}$ is a non-trivial 
lamination as desired.  So, we assume the contrary, that is, $S$ is a compact surface. 

\begin{lemma}
If $S$ is compact, there exists a subsequence $s_{n_k} $ such that every point in $S$ is a limit of points in $s_{n_k}^{-1}(0) $.
\end{lemma}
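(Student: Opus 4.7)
The plan is to exploit the uniform lower bound $r$ on the radii of the limit disks (established in Step 2, independent of the base point) together with the compactness of $S$ and a diagonal subsequence argument to produce a single working subsequence $(n_k)$.

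First I would recall precisely how $S$ was assembled in Step 5: $S$ is a union of analytic disks $\varphi^{(\alpha)} : D_r \to \cp^2$, each obtained by fixing a base point $p^{(\alpha)} \in S$, choosing points $x^{(\alpha)}_k \in s_k^{-1}(0)$ with $x^{(\alpha)}_k \to p^{(\alpha)}$, building the approximately holomorphic disk $\varphi^{(\alpha)}_k : D_r \to \cp^2$ centered at $x^{(\alpha)}_k$ (with image inside $s_k^{-1}(0)$) as in Step 2, and then extracting a uniform limit $\varphi^{(\alpha)}_{k_j} \to \varphi^{(\alpha)}$ via the approximate Montel theorem of Step 4. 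The key features I want to isolate here are that the approximating disk $\varphi^{(\alpha)}_k$ is defined for every sufficiently large $k$ (not merely along the extracted subsequence) and that the radius $r$ is the same constant for all $\alpha$.

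Next, assuming $S$ is compact, each $\varphi^{(\alpha)}(D_r)$ is relatively open in $S$ and contains a ball of uniform size, so finitely many of them, say for $\alpha = 1, \dots, N$, cover $S$. I would then run a standard diagonal extraction across these finitely many families: begin with the subsequence of $\mathbb{N}$ along which $\varphi^{(1)}_{\cdot} \to \varphi^{(1)}$ and refine successively $\alpha = 2, 3, \dots, N$ using Montel again each time, ending with a single subsequence $(n_k)$ such that $\varphi^{(\alpha)}_{n_k} \to \varphi^{(\alpha)}$ uniformly on $D_r$ for every $\alpha \in \{1, \dots, N\}$.

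To conclude, given any $x \in S$ I would pick some $\alpha$ and $\zeta_0 \in D_r$ with $x = \varphi^{(\alpha)}(\zeta_0)$, and then $\varphi^{(\alpha)}_{n_k}(\zeta_0) \in s_{n_k}^{-1}(0)$ converges to $x$. The only real subtlety I anticipate is the very first bookkeeping step, namely checking that the approximating disk $\varphi^{(\alpha)}_k$ is genuinely defined for \emph{every} large $k$ so that the diagonal refinement makes sense; this is built into Step 5, which invokes Montel precisely because one has a full sequence of approximately holomorphic disks to feed it. Compactness of $S$ (needed to reduce to finitely many disks) is what makes the diagonal argument finite and hence uncontroversial.
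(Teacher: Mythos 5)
Your argument is correct and follows essentially the same route as the paper: compactness of $S$ reduces everything to finitely many limit disks of the uniform radius $r$, and finitely many subsequence refinements leave one common subsequence $(n_k)$ along which all of these disks are limits, so every $x\in S$ is approximated by points of $s_{n_k}^{-1}(0)$. One small clarification worth noting: in the paper's Step 5 the approximating disks $\varphi^{(\alpha)}_k$ for later base points are defined only along the previously extracted subsequence (since $x'_k = \varphi_{n_k}(r-\varepsilon)$ is built from the already-refined sequence), so the extracted subsequences are nested and no genuine diagonal argument is required --- one simply takes the innermost subsequence after the finitely many continuation steps; your diagonal phrasing is a harmless reformulation of the same finiteness observation.
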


\begin{proof}
As, $S$ is compact, it will be covered by finitely many holomorphic disks.  So, the process of continuing the disk forward will stop after finite steps.  So, the subsequence, $n_k$ for which the final disk is a limit of disks in $s_{n_k}^{-1}(0)$, will work.
\end{proof}

Rename this sequence as $s_k$.  We recall Proposition 40 in \cite{DS} which states that the limit of currents induced 
by $W_{k} $'s has $\cp^2 $ as its support. So, there exists a point $y \in S^C$ and a 
sequence $y_{k}\in s_{k}^{-1}(0)$ such that $y_{k}$ converges to $y$. Construct a maximal 
surface as before passing through $y$. We call it $R$.  If $R$ is compact, then $R$ must intersect $S$ by Bezout's theorem.  This will imply that $s_{n_k}^{-1}(0)$ will 
intersect itself for sufficiently large $k$. So, $R$ cannot be compact. Therefore, $\overline{R}$ is a non-trivial 
lamination, by Theorem \ref{lamination}. 

Now, to show that each leaf of the lamination, obtained as above, is a totally geodesic 
submanifold, we recall Lemma \ref{pointwise_zero} which says that Weingarten 
maps go to zero pointwise. This implies that the induced connection $\overline{\cdr} $ on each leaf agrees with the Levi-Civita connection $\cdr $ on $\cp^2 $ from relation \ref{rel_connection}.
\end{proof}

\bibliographystyle{alpha}
\bibliography{ExistenceOfNonTrivialLaminations}

\end{document}